\newtheorem{theorem}{Theorem}[section]
\newtheorem{lemma}[theorem]{Lemma}
\theoremstyle{definition}
\newtheorem{definition}[theorem]{Definition}
\theoremstyle{remark}
\numberwithin{equation}{section}
\begin{document}

\def\bar{\overline}

\author[C. Cao]{Chongsheng Cao}
\address{Department of Mathematics \& Statistics \\Florida International University\\
Miami, Florida 33199, USA} \email{caoc@fiu.edu}

\author[Y. Guo]{Yanqiu Guo}
\address{Department of Mathematics \& Statistics \\Florida International University\\
Miami, Florida 33199, USA} \email{yanguo@fiu.edu}

\author[E. S. Titi]{Edriss S. Titi}
\address{Department of
Mathematics \\ Texas A\&M University\\
College Station, TX 77843, USA;
Department of Applied Mathematics and Theoretical Physics, University of Cambridge, Cambridge CB3 0WA, UK;
\textbf{AND} Department of Computer Science and Applied Mathematics \\ Weizmann Institute of Science \\ Rehovot 7610001 Israel}
 \email{titi@math.tamu.edu,  edriss.titi@damtp.cam.ac.uk}

\keywords{Rayleigh-B\'enard convection, rotational fluid flow, cyclonic and anticyclonic coherent structures,
incompressible, infinite Prandtl number, global well-posedness, weak solutions, strong solutions}
\subjclass[2010]{35A01, 35A02, 35Q35, 35K55}

\title[Rapidly rotating convection in the limit of infinite Prandtl number]
{Global well-posedness for a rapidly rotating convection model of tall columnar structure in the limit of infinite Prandtl number}
\date{March 7, 2020}
\maketitle

\begin{abstract}
We analyze a three-dimensional rapidly rotating convection model of tall columnar structure in the limit of infinite Prandtl number, i.e., when
the momentum diffusivity is much more dominant than the thermal diffusivity.
Consequently, the dynamics of the velocity field takes place at a much faster time scale than the temperature fluctuation, and at the limit the velocity field formally adjusts instantaneously to the thermal fluctuation. We prove the global well-posedness of weak solutions and strong solutions to this model.
\end{abstract}

\bigskip
\begin{center}
{\it Dedicated to Professor Matthias Hieber on the occasion of his 60th birthday}
\end{center}
\bigskip

\section{Introduction}
In geophysics, thermal convection is often influenced by planetary rotation. Coherence structures in convection under moderate rotation are exclusively cyclonic. However, for rapid rotation, experiments have revealed a transition to equal populations of cyclonic and anticyclonic structures. For instance, the flow visualization experiments of Vorobieff and Ecke \cite{Vorobieff} identified a striking topological change in the dynamics of the vortices. In the strongly nonlinear and turbulent regimes, plume generation in the thermal boundary layer results in a new population of anticyclonic plumes, in addition to the cyclonic population. Also, the distribution of cyclonic and anticyclonic coherent structures approaches a balance as the Rossby number approaches zero. In order to study such interesting phenomenon numerically, Sprague et al. \cite{SJKW}
derived a reduced system of equations for rotationally constrained convection valid in the asymptotic limit of thin columnar structures and rapid rotation. Performing a numerical simulation of Rayleigh-B\'enard convection in an infinite layer rotating uniformly about the vertical axis, visualization indicates the existence of cyclonic and anticyclonic vortical population in \cite{SJKW}, which is consistent with the experimental results described in \cite{Vorobieff}. Also see \cite{JKMW, JK}.

The reduced three-dimensional rapidly rotating convection model of tall columnar structure introduced in \cite{SJKW} is given by the system:
\begin{align}
&\frac{\partial w}{\partial t} + \mathbf u \cdot \nabla_h w + \frac{\partial \psi}{\partial z} = \frac{\text{Ra}}{\text{Pr}}\theta' + \Delta_h w,   \label{m-1} \\
&\frac{\partial \omega}{\partial t} + \mathbf u \cdot \nabla_h \omega - \frac{\partial w}{\partial z} = \Delta_h \omega,   \label{m-2}\\
&\frac{\partial \theta'}{\partial t}  + \mathbf u \cdot \nabla_h \theta' +  w \frac{\partial \bar{\theta}}{\partial z} = \frac{1}{\text{Pr}} \Delta_h \theta',   \label{m-3}\\
&\frac{\partial(\bar{\theta' w})}{\partial z} = \frac{1}{\text{Pr}} \frac{\partial^2 \bar \theta}{\partial z^2},       \label{m-4}\\
& \nabla_h \cdot \mathbf u=0.     \label{m-5}
\end{align}
The above system is considered subject to periodic boundary conditions in $\mathbb R^3$ with fundamental periodic domain $\Omega = [0,2\pi L]^2 \times [0,2\pi]$. The unknowns are functions of $(x,y,z,t)$, where $(x,y,z)\in \Omega$ and $t\geq 0$. In (\ref{m-1})-(\ref{m-5}), $\nabla_h=(\frac{\partial}{\partial x}, \frac{\partial}{\partial y})$ denotes the horizontal gradient and $\Delta_h=\frac{\partial^2}{\partial x^2}+ \frac{\partial^2}{\partial y^2}$ denotes the horizontal Laplacian. In the model, $\mathbf u=(u,v)$ is the horizontal component of the three-dimensional velocity vector field $(u,v,w)$, and
$\omega=  \nabla_h \times \mathbf u=\partial_x v - \partial_y u$ denotes the vertical component of the vorticity. Moreover, the stream function for the horizontal flow is denoted by $\psi=\Delta_h ^{-1}\omega$ such that its horizontal average $\bar{\psi} = \frac{1}{4 \pi^2 L^2}  \int_{[0,2\pi L]^2} \psi(x,y,z,t) dx dy = 0$. In addition,
$\theta'=\theta - \bar{\theta}$ represents the horizontal fluctuation of the temperature $\theta$, where
$\bar{\theta}(z,t)=\frac{1}{4\pi^2 L^2} \int_{[0,2\pi L]^2} \theta(x,y,z,t) dx dy$ is the horizontal-mean temperature. In the above system, Ra is the Rayleigh number and Pr is the Prandtl number. We comment that the assumption
$\nabla_h \cdot \mathbf u=0$ means that the \emph{horizontal} flow is divergence-free.

The global regularity for system (\ref{m-1})-(\ref{m-5}) is unknown. The main difficulty of analyzing (\ref{m-1})-(\ref{m-5}) lies in the fact that the physical domain is three-dimensional, whereas the regularizing viscosity acts only on the horizontal variables, and the equations contain troublesome terms $\frac{\partial \phi}{\partial z}$ and $\frac{\partial w}{\partial z}$ involving the derivative in the vertical direction. In our recent paper \cite{CGT-2}, system (\ref{m-1})-(\ref{m-5}) was regularized by a weak dissipation term and the global well-posedness of strong solutions was established for the regularized system.

System (\ref{m-1})-(\ref{m-5}) is a reduced model derived from the three-dimensional Boussinesq equations by using the asymptotic theory. Generally speaking, the Boussinesq approximation for buoyancy-driven flow is applied to problems where the fluid varies in temperature from one place to another, driving a flow of fluid and heat transfer. In particular, the Boussinesq approximation to the Rayleigh-B\'enard convection is a system of equations coupling the three-dimensional Navier-Stokes equations to a heat advection-diffusion equation. For small Rossby number (i.e., rapid rotation) and large ratio of the depth of the fluid layer to the horizontal scale (i.e., tall columnar structures), the 3D Boussinesq equations under the influence of a Coriolis force term, can be reduced to  system (\ref{m-1})-(\ref{m-5}) asymptotically.
The derivation of model (\ref{m-1})-(\ref{m-5}) was motivated by the Taylor-Proudman constraint \cite{Proudman, Taylor} which suggests that rapidly rotating convection takes place in tall columnar structures.

According to the derivation of model (\ref{m-1})-(\ref{m-5}) in \cite {SJKW} from the 3D Boussinesq equations,
the state variables, i.e., the velocity, pressure and temperature, are expanded in terms of the small parameter $Ro$, which stands for the Rossby number. For rapidly rotating flow, i.e., $Ro \ll 1$, the leading-order flow is horizontally divergence-free (see \cite{SJKW}). Roughly speaking, if the Rossby number is small in the Boussinesq equations, then the Coriolis force and the pressure gradient force are relatively large, which results in an equation in which the leading order terms are in geostrophic balance: the pressure gradient force is balanced by the Coriolis effect. Then taking the curl of the geostrophic balance equation implies that the \emph{horizontal} flow is divergence-free, namely, $\nabla_h \cdot \mathbf u=u_x + v_y=0$. In addition, the term $\frac{\partial \psi}{\partial z}$ in equation (\ref{m-1}) also originates from the geostrophic balance.

Since the original Boussinesq equations are considered in a tall column
$(x,y,\tilde z) \in [0,2\pi L]^2 \times [0,2\pi \ell]$, where the aspect ratio $\ell/L \gg 1$,
it is natural to introduce the scaled vertical variable $z=\tilde z/\ell \in [0,2\pi]$ which appears in system (\ref{m-1})-(\ref{m-5}). Set $u(x,y,z)=\tilde u(x,y,\tilde z)$, $v(x,y,z)=\tilde v(x,y,\tilde z)$ and $w(x,y,z)=\tilde w(x,y,\tilde z)$, where $(\tilde u, \tilde v, \tilde w)^{tr}$ represents the velocity vector field for the original Boussinesq equations.
Note, the divergence-free condition of $(\tilde u, \tilde v, \tilde w)^{tr}$ reads
$\frac{\partial \tilde u}{\partial x} + \frac{\partial \tilde v}{\partial y}  + \frac{\partial \tilde w}{\partial \tilde z} =0$, which implies that $\frac{\partial u}{\partial x} + \frac{\partial v}{\partial y} + \frac{1}{\ell} \frac{\partial w}{\partial z}=0$.
Then, for $\ell \gg 1$, one can ignore the term $\frac{1}{\ell}  \frac{\partial w}{\partial z}$ and obtain that $\frac{\partial u}{\partial x} + \frac{\partial v}{\partial y}=0$. In sum, the fast rotation and the tall columnar structure both imply that to leading order terms the horizontal flow is divergence-free. Moreover, the absence of vertical diffusion in system (\ref{m-1})-(\ref{m-5}) is also a consequence of the large aspect ratio of the fluid region. Furthermore, it is remarked in \cite{SJKW} that in the classical small-aspect-ratio (flat) regime, the strong stable stratification permits weak vertical motions only, while in the present large-aspect-ratio (tall) case, the unstable stratification permits substantial vertical motions.

There are two dimensionless numbers in system (\ref{m-1})-(\ref{m-5}). They are the Prandtl number Pr and the Rayleigh number Ra. The Prandtl number Pr represents the ratio of molecular diffusion of momentum to molecular diffusion of heat. More precisely, one defines
\begin{align}
\text{Pr}=\frac{\text{momentum diffusivity}}{\text{thermal diffusivity}} = \frac{\nu}{\alpha}= \frac{\mu/\rho}{k/(c_p \rho)}
=\frac{c_p \mu}{k}.
\end{align}
Here, $\nu$ represents momentum diffusivity, i.e., kinematic viscosity. Notice that $\nu=\mu/\rho$, where $\mu$ is dynamics viscosity and $\rho$ is the constant density.  Also, $\alpha$ stands for the thermal diffusivity,
which is equal to $k/(c_p \rho)$, where $k$ is thermal conductivity and $c_p$ represents the specific heat capacity of the fluid. Fluids with small Prandtl numbers are free-flowing liquids with high thermal conductivity and are therefore a good choice for heat transfer liquids. Liquid metals such as mercury have small Prandtl numbers. On the other hand, with increasing viscosity, the Prandtl number also increases, leads to the phenomenon that the momentum transport dominates over the heat transport and acts on a faster time scale. For instance, concerning the engine oil, convection is very effective in transferring energy in comparison to pure conduction, so momentum diffusivity is dominant. Another example is Earth's mantle, which has extremely large Prandtl number.

The Rayleigh number Ra is another dimensionless number appeared in model (\ref{m-1})-(\ref{m-5}).
It represents the strength of the buoyancy in the fluid driven by the heat gradient.

In this manuscript, we consider rapidly rotating convection in the limit of infinite Prandtl number Pr.
Under such scenario, the dynamics of the momentum acts on a much faster time scale than the heat dynamics.
For this problem, the appropriate time scale is the horizontal thermal diffusion time. Using the substitution (cf. \cite{SJKW})
\begin{align*}
t \rightarrow \text{Pr}\; t, \;\;\;  \mathbf u \rightarrow \frac{1}{\text{Pr}} \mathbf u, \;\;\;
w \rightarrow \frac{1}{\text{Pr}} w,
\end{align*}
system (\ref{m-1})-(\ref{m-5}) becomes
\begin{align}
&\frac{1}{\text{Pr}}\left(\frac{\partial w}{\partial t} + \mathbf u \cdot \nabla_h w\right) + \frac{\partial \psi}{\partial z} = \text{Ra}\, \theta' + \Delta_h w,   \label{m-11} \\
&\frac{1}{\text{Pr}}\left(\frac{\partial \omega}{\partial t} + \mathbf u \cdot \nabla_h \omega \right) - \frac{\partial w}{\partial z} = \Delta_h \omega,   \label{m-22}\\
&\frac{\partial \theta'}{\partial t}  + \mathbf u \cdot \nabla_h \theta' +  w \frac{\partial \bar{\theta}}{\partial z} =  \Delta_h \theta',   \label{m-33}\\
&\frac{\partial(\bar{\theta' w})}{\partial z} = \frac{\partial^2 \bar \theta}{\partial z^2},      \label{m-44} \\
& \nabla_h \cdot \mathbf u=0.     \label{m-55}
\end{align}
Then in the limit of infinite Prandtl number, i.e., letting
$\text{Pr} \rightarrow \infty$ in system (\ref{m-11})-(\ref{m-55}), one formally obtains the following system of equations
\begin{align}
& \frac{\partial \psi}{\partial z} = \text{Ra}\,\theta' + \Delta_h w,   \label{inP-1} \\
& - \frac{\partial w}{\partial z} = \Delta_h \omega,   \label{inP-2}\\
&\frac{\partial \theta'}{\partial t}  + \mathbf u \cdot \nabla_h \theta' +  w \frac{\partial \bar{\theta}}{\partial z} = \Delta_h \theta',   \label{inP-3}\\
&\frac{\partial(\bar{\theta' w})}{\partial z} = \frac{\partial^2 \bar \theta}{\partial z^2},     \label{inP-4}\\
& \nabla_h \cdot \mathbf u=0.     \label{inP-5}
\end{align}
The system is considered subject to periodic boundary conditions in $\mathbb R^3$ with fundamental periodic domain $\Omega = [0,2\pi L]^2 \times [0,2\pi]$. Here, $\omega = \nabla_h \times \mathbf u$, $\psi = \Delta_h^{-1} \omega$ such that its horizontal average $\bar \psi=0$.
Recall that the horizontal thermal fluctuation $\theta'$ of the temperature $\theta$ is defined as
$\theta'=\theta - \bar{\theta}$, where $\bar{\theta}(z,t)=\frac{1}{4\pi^2 L^2} \int_{[0,2\pi L]^2} \theta(x,y,z,t) dx dy$ is the horizontal-mean temperature. In system (\ref{inP-1})-(\ref{inP-5}), the velocity field acting on a very fast time scale, adjusts instantaneously to the dynamics of the thermal fluctuations, demonstrated by the linear equations (\ref{inP-1})-(\ref{inP-2}).
Therefore, the initial condition is imposed on $\theta'$ only: $\theta'(0)=\theta'_0$.
The purpose of this work is to prove the global well-posedness of weak and strong solutions for system (\ref{inP-1})-(\ref{inP-5}) defined on a fundamental periodic space domain $\Omega=[0,2\pi L]^2 \times [0,2\pi]$. Also, in order to obtain the uniqueness of the temperature $\theta$, we assume that the average temperature is zero, i.e., $\int_{\Omega} \theta(x,y,z,t) dx dy dz =0$, for all $t\geq 0$.

In the literature, there were some analytical studies for the three-dimensional Boussinesq equations in the limit of infinite Prandtl number. Wang \cite{Wang} rigorously justified the infinite Prandtl number convection model as the limit of the Boussinesq equations when the Prandtl number approaches infinity (see also \cite{Wang2, Wang3}). Also, for infinite Prandtl number convection, there have been several rigorous derivation of upper bounds of the upwards heat flux, as given by the Nusselt number Nu, in terms of the forcing via the imposed temperature difference, as given by the Rayleigh number in the turbulent regime Ra $\gg 1$. For example, the work \cite{CD} by Constantin and Doering was one of the early papers in the literature for this topic. More recently, by combining the background field method and the maximal regularity in $L^{\infty}$, Otto and Seis \cite{Otto2} showed that $\text{Nu}\lessapprox  \text{Ra}^{1/3} (\log \log \text{Ra})^{1/3}$ -- an estimate that is only a double logarithm away from the supposedly optimal scaling $\text{Nu} \sim \text{Ra}^{1/3}$. See also \cite{CD2, Otto, FGMMW, Otto3, Wang4, Whitehead} and references therein.

It is worth mentioning that in \cite{CFT}, Cao, Farhat and Titi established the global regularity for an inviscid three-dimensional slow limiting ocean dynamics model, which was derived as a strong rotation limit of the rotating and stratified Boussinesq equations.

The paper is organized as follows. In section \ref{sec-results}, we state main results of the paper, i.e.,
the global well-posedness of weak solutions and strong solutions for the infinite Prandtl number convection (\ref{inP-1})-(\ref{inP-5}). In section \ref{sec-ineq}, we provide some auxiliary inequalities and some well-known identities, which will be used repeatedly in our energy estimates. In section \ref{sec-weak}, we give a detailed proof for the global well-posedness of weak solutions. Finally, section \ref{sec-strong} is devoted to the proof for the global well-posedness of strong solutions.
\vspace{0.1 in}

\section{Main results}   \label{sec-results}
In this section, we give definitions of weak solutions as well as strong solutions for system (\ref{inP-1})-(\ref{inP-5}). Then, we state the main results of the manuscript, namely, the global well-posedness of weak solutions and strong solutions for system (\ref{inP-1})-(\ref{inP-5}), subject to periodic boundary conditions on a three-dimensional fundamental periodic domain $\Omega=[0,2\pi L]^2 \times [0,2\pi]$.

\subsection{Weak solutions}
For a periodic function $f$ defined on the periodic domain $\Omega=[0,2\pi L]^2 \times [0,2\pi]$, the horizontal mean of $f$ is defined as
\begin{align}   \label{mean}
\bar f(z)=  \frac{1}{4 \pi^2 L^2}\int_{[0, 2\pi L]^2} f(x,y,z) dx dy.
\end{align}

We define the space $H^1_h(\Omega)$ of periodic functions on $\Omega$ with horizontal average zero by
\begin{align*}
H^1_h(\Omega)=\{f\in L^2(\Omega): \nabla_h f \in L^2(\Omega) \; \text{and} \; \bar{f}=0 \},
\end{align*}
with the norm $\|f\|_{H^1_h(\Omega)}= \left(\int_{\Omega}  |\nabla_h f|^2 dx dy dz\right)^{1/2}$.
We denote by $(H^1_h(\Omega))'$ the dual space of $H^1_h(\Omega)$.

For $s>0$, we denote the space of $H^s$ periodic functions on $[0,2\pi]$ with average value zero by:
\begin{align}  \label{Hdot}
\dot H^s(0,2\pi)= \{\phi \in H^s(0,2\pi): \int_0^{2\pi} \phi(z) dz =0\}.
\end{align}
Also, we denote the dual space of $\dot H^1(0,2\pi)$ by $H^{-1}(0,2\pi)= (\dot H^1(0,2\pi))'$.

Recall $\theta'=\theta- \bar \theta$ represents the fluctuation of the temperature $\theta$, about the horizontal average. Also, $(\mathbf u, w)=(u,v,w)$ is the three-dimensional velocity vector field on the periodic domain $\Omega$.

Let us define a weak solution for system (\ref{inP-1})-(\ref{inP-5}).
\begin{definition}  \label{def-weak}
We call $(\theta', \bar{\theta}, \mathbf u, w)$ a \emph{weak solution} on $[0,T]$ for system (\ref{inP-1})-(\ref{inP-5}) if
\begin{align*}
&\theta'  \in L^2(0,T;H^1_h(\Omega)) \cap  C([0,T];L^2(\Omega));
\;\;\; \theta'_t \in L^2(0,T;(H^1_h(\Omega))');   \notag\\
&\bar{\theta} \in   L^2(0,T; \dot H^1(0,2\pi));    \notag\\
&\Delta_h \mathbf u, \; \Delta_h w   \in     C ([0,T];L^2(\Omega));
\;\; \mathbf u_z,  w_z,  \Delta_h \omega,  \nabla_h^3 w   \in L^2(\Omega \times (0,T)),
\end{align*}
and the equations hold in the function spaces specified below:
\begin{align}
& \frac{\partial \psi}{\partial z} = \text{Ra} \, \theta' + \Delta_h w,     \;\; \text{in}  \;\; L^2(0,T;H^1_h(\Omega)) \cap  C([0,T];L^2(\Omega)),  \label{wD-1}   \\
& - \frac{\partial w}{\partial z} = \Delta_h \omega,     \;\; \text{in}  \;\; L^2(\Omega \times (0,T)),   \label{wD-2}    \\
&\frac{\partial \theta'}{\partial t}  + \mathbf u \cdot \nabla_h \theta' +  w \frac{\partial \bar{\theta}}{\partial z} = \Delta_h \theta',   \;\; \text{in}  \;\; L^2(0,T;(H^1_h(\Omega))'),    \label{wD-3} \\
&\frac{\partial(\bar{\theta' w})}{\partial z} = \frac{\partial^2 \bar \theta}{\partial z^2},      \;\; \text{in}  \;\; L^2(0,T;H^{-1}(0,2\pi)),    \label{wD-4}
\end{align}
with $\nabla_h \cdot \mathbf u=0$, $\omega =   \nabla_h \times \mathbf u$, $\psi =  \Delta_h^{-1} \omega$, and
$\bar{\theta'}=\bar{w}=\bar{\omega}=\bar{\psi}=0$, $\bar{\mathbf u}=0$, such that the initial condition $\theta'(0)=\theta'_0 \in L^2(\Omega)$ is satisfied.
\end{definition}

According to the derivation of model (\ref{m-1})-(\ref{m-5}) in \cite{SJKW}, the quantities $\theta'$, $\mathbf u$ and $w$ are ``fluctuating" quantities about the horizontal mean, i.e., the original quantities subtracted by their horizontal means. Therefore, in the above definition of weak solutions, all quantities are demanded to have horizontal average zero.

In Definition \ref{def-weak}, the horizontal mean temperature
$\bar{\theta}(z,t) = \frac{1}{4\pi^2 L^2}\int_{[0,2\pi L]^2} \theta(x,y,z,t) dx dy$ belongs to the space $\dot H^1(\Omega)$,
which demands that $\int_0^{2\pi} \bar{\theta}(z,t) dz =0$ due to (\ref{Hdot}).
Therefore, $\int_{\Omega} \theta(x,y,z,t) dx dy dz =0$, namely, the average temperature is zero for all $t \in [0,T]$.

In the next theorem we state the existence and uniqueness of global weak solutions to system (\ref{inP-1})-(\ref{inP-5}) as well as the continuous dependence on initial data.

\begin{theorem}[Global well-posedness of weak solutions]  \label{thm-weak}
Assume $\theta'_0 \in L^2(\Omega)$ with $\bar{\theta_0'}= 0$. Then, system (\ref{inP-1})-(\ref{inP-5}) has a unique weak solution
$(\theta', \bar{\theta}, \mathbf u, w)$ for all $t\geq 0$, in the sense of Definition \ref{def-weak}. Moreover, the solution satisfies the following energy equality:
\begin{align}   \label{EI}
\frac{1}{2} \|\theta'(t)\|_2^2   +  \int_0^t \|\nabla_h \theta' (s)\|_2^2 ds
+  4\pi^2 L^2     \int_0^t \int_0^{2\pi}  |\partial_z \bar{\theta}(z,s)|^2 dz ds  =  \frac{1}{2} \|\theta'_0\|_2^2,
\end{align}
for all $t\geq 0$. Also, the following decay estimates are valid:
\begin{align*}
&\|\theta'(t)\|_2^2 + \|\Delta_h \mathbf u(t)\|_2^2 +   \|\Delta_h w(t)\|_2^2  \leq C e^{-\frac{2}{L^2} t} \|\theta'_0\|_2^2,
 \;\; \text{for all} \;\; t\geq 0; \\
 &  \int_0^{2\pi} |\bar{\theta}_z(z,t)|^2 dz \leq C  e^{-\frac{4}{L^2} t} \|\theta'_0\|_2^4,     \;\; \text{for all} \;\; t\geq 0.
  \end{align*}
In addition, if $\{{\theta'_{0,n}}\}$ is a sequence of initial data in $L^2(\Omega)$ such that ${\theta'_{0,n}} \rightarrow \theta'_0$ in $L^2(\Omega)$,
then the corresponding weak solutions $\{(\theta'_n, \bar{\theta}_n, \mathbf u_n, w_n)\}$
and $(\theta',\bar{\theta},\mathbf u, w)$ with
$\theta'_n(0)=\theta'_{0,n}$ and $\theta'(0)=\theta'_0$ satisfy
$\theta'_n \rightarrow \theta'$ in $C([0,T];L^2(\Omega)) \cap L^2(0,T;H^1_h(\Omega))$,
$\bar{\theta}_n \rightarrow \bar{\theta}$ in $L^2(0,T; \dot H^1(0,2\pi))$,
and $(\mathbf u_n,w_n)\rightarrow (\mathbf u, w)$ in $L^2(0,T;H^1(\Omega))$.
\end{theorem}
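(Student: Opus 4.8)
The plan is to recognize that \eqref{inP-1}, \eqref{inP-2}, \eqref{inP-5} are purely diagnostic: they determine the velocity field $(\mathbf u, w)$ as a bounded, $z$-nonlocal linear operator applied to $\theta'$, so that the entire system collapses to a single nonlocal, \emph{horizontally} parabolic equation for $\theta'$. First I would derive this operator explicitly. Writing $\omega=\Delta_h\psi$ in \eqref{inP-2} gives $\partial_z w=-\Delta_h^2\psi$; applying $\Delta_h$ to \eqref{inP-1} and eliminating $\omega=-\Delta_h^{-1}\partial_z w$ yields the single elliptic equation
\begin{equation*}
\Delta_h^3 w+\partial_z^2 w=-\text{Ra}\,\Delta_h^2\theta'.
\end{equation*}
Since $\theta'$ has zero horizontal average, only Fourier modes $(\mathbf k,m)$ with $\mathbf k\neq0$ appear, and the equation inverts to the bounded multiplier $\widehat w(\mathbf k,m)=\text{Ra}\,|\mathbf k|^4(|\mathbf k|^6+m^2)^{-1}\widehat{\theta'}(\mathbf k,m)$; the horizontal velocity follows from $\mathbf u=\nabla_h^\perp\psi$ and $\psi=-\Delta_h^{-2}\partial_z w$. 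Reading off each component mode by mode (using $|\mathbf k|^6+m^2\geq2|\mathbf k|^3|m|$) produces exactly the bounds required by Definition \ref{def-weak}, namely $\|\Delta_h w\|_2+\|\Delta_h\mathbf u\|_2\leq C\|\theta'\|_2$ and $\|\partial_z w\|_2+\|\partial_z\mathbf u\|_2+\|\Delta_h\omega\|_2+\|\nabla_h^3 w\|_2\leq C\|\nabla_h\theta'\|_2$, together with the additional mixed-norm smoothing $\|w\|_{L^\infty_z L^2_{xy}}+\|\mathbf u\|_{L^\infty_z L^2_{xy}}\leq C\|\theta'\|_2$. Finally, integrating \eqref{inP-4} in $z$ expresses $\partial_z\bar\theta=\overline{\theta' w}-\frac{1}{2\pi}\int_0^{2\pi}\overline{\theta' w}\,dz$ as a quadratic functional of $\theta'$. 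Thus only \eqref{inP-3} remains to be solved.

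For existence I would run a Galerkin scheme in the horizontal-mean-zero Fourier modes of $\theta'$; the diagnostic operators above are Fourier multipliers, hence they preserve each truncation. Testing \eqref{inP-3} against $\theta'$, the advection term drops because $\nabla_h\cdot\mathbf u=0$, while the buoyancy term collapses, using that $\partial_z\bar\theta=\partial_z\bar\theta(z,t)$ together with \eqref{inP-4}, to
\begin{equation*}
\int_\Omega w\,\partial_z\bar\theta\,\theta'\,dx\,dy\,dz=4\pi^2 L^2\int_0^{2\pi}\partial_z\bar\theta\,\overline{w\theta'}\,dz=4\pi^2 L^2\int_0^{2\pi}|\partial_z\bar\theta|^2\,dz\geq0,
\end{equation*}
where I used $\overline{w\theta'}=\partial_z\bar\theta+c(t)$ and $\int_0^{2\pi}\partial_z\bar\theta\,dz=0$. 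This \emph{nonnegativity} — so the buoyancy coupling acts as a damping rather than a source — is the structural heart of the argument: it yields the energy equality \eqref{EI} at the Galerkin level and hence the uniform bounds $\theta'\in L^\infty(0,T;L^2)\cap L^2(0,T;H^1_h)$. Transferring these through the diagnostic operators bounds the velocity, and testing \eqref{inP-3} against $H^1_h$ functions (estimating the two products by the anisotropic inequalities of Section \ref{sec-ineq}) bounds $\partial_t\theta'$ in $L^2(0,T;(H^1_h)')$.

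With these bounds, Aubin--Lions compactness gives a subsequence converging strongly in $L^2(\Omega\times(0,T))$, which, combined with the fact that $(\mathbf u,w,\partial_z\bar\theta)$ depend continuously on $\theta'$, suffices to pass to the limit in the quadratic terms and to recover a weak solution in the sense of \eqref{wD-1}--\eqref{wD-4} attaining the datum and inheriting \eqref{EI}. For uniqueness and continuous dependence I would test the equation for the difference $\Theta=\theta_1'-\theta_2'$ of two solutions against $\Theta$; the self-advection term vanishes, and the surviving cross terms $-\int\theta_2'\,\mathbf U\cdot\nabla_h\Theta$ (with $\mathbf U=\mathbf u[\Theta]$) and the buoyancy difference are controlled, via the same anisotropic product estimates and Young's inequality, by $\tfrac12\|\nabla_h\Theta\|_2^2+g(t)\|\Theta\|_2^2$ with $g\in L^1(0,T)$. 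Gr\"onwall's inequality then forces $\Theta\equiv0$, and, run with nonzero data difference, gives the asserted convergence of $\theta'_n$ in $C([0,T];L^2)\cap L^2(0,T;H^1_h)$; convergence of the slaved velocities in $L^2(0,T;H^1)$ then follows from the operator bounds of the first step.

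The decay estimates follow from \eqref{EI}: the horizontal Poincar\'e inequality $\|\nabla_h\theta'\|_2^2\geq L^{-2}\|\theta'\|_2^2$ (the lowest nonzero horizontal wavenumber is $1/L$) turns \eqref{EI} into $\frac{d}{dt}\|\theta'\|_2^2\leq-\frac{2}{L^2}\|\theta'\|_2^2$, whence $\|\theta'(t)\|_2^2\leq e^{-2t/L^2}\|\theta'_0\|_2^2$; the bounds $\|\Delta_h\mathbf u\|_2+\|\Delta_h w\|_2\leq C\|\theta'\|_2$ transfer this rate, and the estimate $\|\partial_z\bar\theta\|_{L^2_z}^2\lesssim\|\theta'\|_2^2\,\|w\|_{L^\infty_z L^2_{xy}}^2\lesssim\|\theta'\|_2^4$ gives the remaining $e^{-4t/L^2}\|\theta'_0\|_2^4$ bound. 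I expect the main obstacle to be precisely the control of the two nonlinear terms, the advection $\mathbf u\cdot\nabla_h\theta'$ and the buoyancy coupling $w\,\partial_z\bar\theta$, in a three-dimensional domain whose dissipation $\Delta_h$ is purely horizontal: there is no vertical smoothing on $\theta'$, so $L^\infty$ bounds on the velocity are unavailable, and the compensating regularity must be extracted entirely from the diagnostic operators in the form of anisotropic mixed-norm estimates. These same estimates drive the $\partial_t\theta'$ bound needed for compactness and the difference estimate needed for uniqueness and continuous dependence.
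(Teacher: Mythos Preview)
Your overall strategy coincides with the paper's: solve the diagnostic linear equations \eqref{inP-1}--\eqref{inP-2} as Fourier multipliers, observe that the buoyancy coupling is a nonnegative damping yielding the energy identity \eqref{EI}, run a Galerkin scheme, use Aubin--Lions for compactness, and close uniqueness by a Gr\"onwall estimate on the difference. The multiplier bounds, the energy identity, the decay estimates, and the uniqueness argument are all as in the paper.

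There is, however, one genuine gap in your compactness step. You write that from $\theta'_m\in L^2(0,T;H^1_h)$ and $\partial_t\theta'_m\in L^2(0,T;(H^1_h)')$, Aubin--Lions yields a subsequence converging strongly in $L^2(\Omega\times(0,T))$, and you then transfer this to $(\mathbf u,w,\partial_z\bar\theta)$ via the diagnostic operators. This fails as stated: the embedding $H^1_h(\Omega)\hookrightarrow L^2(\Omega)$ is \emph{not} compact, because $H^1_h$ controls only horizontal derivatives and the domain is three-dimensional, so there is no compactness in the $z$-direction. You cannot extract strong $L^2$ convergence of $\theta'_m$ from these bounds alone.

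The paper's remedy is to apply Aubin to the velocities instead. Your own multiplier computation shows $\mathbf u_m,w_m\in L^2(0,T;H^1(\Omega))$ with genuine vertical regularity ($\|\partial_z\mathbf u_m\|_2+\|\partial_z w_m\|_2\leq C\|\nabla_h\theta'_m\|_2$), and the same multiplier, applied to $\partial_t\theta'_m$ and using that it gains one horizontal derivative (namely $|\widehat{w}(\mathbf k,m)|\leq C|\mathbf k|^{-2}|\widehat{\theta'}(\mathbf k,m)|$), upgrades $\partial_t\theta'_m\in L^2(0,T;(H^1_h)')$ to $\partial_t\mathbf u_m,\partial_t w_m\in L^2(\Omega\times(0,T))$. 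Now $H^1(\Omega)\hookrightarrow L^2(\Omega)$ is compact, Aubin gives $\mathbf u_m\to\mathbf u$, $w_m\to w$ strongly in $L^2(\Omega\times(0,T))$, and the nonlinear terms pass to the limit by pairing this strong convergence against the weak convergence of $\nabla_h\theta'_m$ and $\partial_z\bar\theta_m$. In other words, the slaving structure does not transfer compactness \emph{from} $\theta'$ to the velocities; it manufactures the missing vertical regularity on the velocities directly, and compactness is obtained there.
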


\vspace{0.1 in}

\subsection{Strong solutions}
We define a strong solution of system (\ref{inP-1})-(\ref{inP-5}).
\begin{definition}   \label{def-solution}
We call $(\theta', \bar{\theta}, \mathbf u, w)$ a \emph{strong solution} on $[0,T]$ for system (\ref{inP-1})-(\ref{inP-5}) if
\begin{align}
&\theta'   \in L^{\infty} (0,T;H^1(\Omega))\cap C([0,T];L^2(\Omega)) \; \text{such that}\; \Delta_h \theta',\; \nabla_h \theta'_z \in L^2(\Omega \times (0,T))      ;       \label{reg-1} \\
&\theta'_t \in L^2(\Omega \times (0,T)) ;    \label{reg-1'} \\
&\bar{\theta} \in   L^{\infty}(0,T; \dot H^2(0,2\pi));      \label{reg-2} \\
&\Delta_h \mathbf u, \; \Delta_h w      \in     L^{\infty} (0,T;H^1(\Omega));      \;\;
 \mathbf u_z,  w_z \in L^{\infty}(0,T; L^2(\Omega)),
\label{reg-3}
\end{align}
and the equations hold in the function spaces specified below:
\begin{align}
& \frac{\partial \psi}{\partial z} = \text{Ra} \, \theta' + \Delta_h w,     \;\; \text{in}  \;\; L^{\infty}(0,T;H^1(\Omega)),     \label{inD-1} \\
& - \frac{\partial w}{\partial z} = \Delta_h \omega,     \;\; \text{in}  \;\; L^{\infty}(0,T;L^2(\Omega)),      \label{inD-2}\\
&\frac{\partial \theta'}{\partial t}  + \mathbf u \cdot \nabla_h \theta' +  w \frac{\partial \bar{\theta}}{\partial z} = \Delta_h \theta',   \;\; \text{in}  \;\; L^2(\Omega \times (0,T)),  \label{inD-3}\\
&\frac{\partial(\bar{\theta' w})}{\partial z} = \frac{\partial^2 \bar \theta}{\partial z^2},      \;\; \text{in}  \;\; L^{\infty}(0,T;L^2(0,2\pi)),   \label{inD-4}
\end{align}
with $\nabla_h \cdot \mathbf u=0$, $\omega = \nabla_h \times \mathbf u$, $\psi =  \Delta_h^{-1} \omega$, and
$\bar{\theta'}=\bar{w}=\bar{\omega}=\bar{\psi}=0$, $\bar{\mathbf u}=0$,
such that the initial condition $\theta'(0) =\theta'_0 \in H^1_0(\Omega)$ is satisfied.

\end{definition}

\vspace{0.1 in}

The following theorem states the existence and uniqueness of global strong solutions to system (\ref{inP-1})-(\ref{inP-5}).
\begin{theorem}[Global well-posedness of strong solutions]  \label{thm-strong}
Assume $\theta'_0 \in H^1(\Omega)$ with
$\bar{\theta_0'}= 0$. Then, system (\ref{inP-1})-(\ref{inP-5}) has a unique strong solution
$(\theta', \bar{\theta}, \mathbf u, w)$ for all $t\geq 0$, in the sense of Definition \ref{def-solution}. Also, energy equality (\ref{EI}) is valid.
\end{theorem}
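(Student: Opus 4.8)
The plan is to upgrade the weak solution provided by Theorem \ref{thm-weak} to a strong one by establishing a priori $H^1$ bounds on $\theta'$ that are uniform in time, and then to pass these bounds through a Galerkin approximation. Since every strong solution is in particular a weak solution, both uniqueness and the energy equality \eqref{EI} are then inherited from Theorem \ref{thm-weak}, so the only genuinely new content is the higher-order estimate. The starting point is that the diagnostic equations \eqref{inP-1}--\eqref{inP-2}, together with $\nabla_h\cdot\mathbf u=0$, $\omega=\nabla_h\times\mathbf u$ and $\psi=\Delta_h^{-1}\omega$, determine $(\mathbf u,w)$ as a smoothing linear operator applied to $\theta'$: eliminating $\psi$ and $\omega$ gives $\Delta_h^3 w+w_{zz}=-\text{Ra}\,\Delta_h^2\theta'$, whose Fourier-symbol analysis (carried out in Section \ref{sec-ineq}) yields the elliptic bounds I shall use throughout, schematically $\|\Delta_h w\|_2+\|\Delta_h\mathbf u\|_2\le C\,\text{Ra}\,\|\theta'\|_2$, together with $\|w_z\|_2+\|\mathbf u_z\|_2+\|\nabla_h^3 w\|_2+\|\nabla_h\Delta_h\mathbf u\|_2\le C\,\text{Ra}\,\|\nabla_h\theta'\|_2$ and $\|\nabla_h\mathbf u_z\|_2\le C\,\text{Ra}\,\|\Delta_h\theta'\|_2$. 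The essential feature is that the velocity is always one to two derivatives smoother than $\theta'$; in particular its vertical derivatives $\mathbf u_z,w_z$ are controlled by the horizontal gradient $\nabla_h\theta'$, and this is precisely what must compensate for the complete absence of vertical dissipation in \eqref{inP-3}.

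For the horizontal part I test \eqref{inP-3} with $-\Delta_h\theta'$. Whereas the advection term contributed nothing to the $L^2$ balance of Theorem \ref{thm-weak} by incompressibility, here I obtain $\tfrac12\frac{d}{dt}\|\nabla_h\theta'\|_2^2+\|\Delta_h\theta'\|_2^2=\int_\Omega(\mathbf u\cdot\nabla_h\theta')\Delta_h\theta'+\int_\Omega w\,\bar\theta_z\,\Delta_h\theta'$. I bound each trilinear term with the anisotropic Ladyzhenskaya/Agmon inequalities of Section \ref{sec-ineq}, always keeping $\Delta_h\theta'$ in $L^2$, distributing one half of a horizontal derivative onto the two remaining factors, and estimating the velocity factor through the elliptic bounds above. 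Because $\mathbf u$ gains derivatives, every factor carrying a derivative lands on a controllable quantity, and Young's inequality then absorbs a small multiple of $\|\Delta_h\theta'\|_2^2$ into the left-hand side.

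The delicate step is the vertical estimate. Applying $\partial_z$ to \eqref{inP-3} and testing with $\theta'_z$ (incompressibility again kills $\int(\mathbf u\cdot\nabla_h\theta'_z)\theta'_z$) gives $\tfrac12\frac{d}{dt}\|\theta'_z\|_2^2+\|\nabla_h\theta'_z\|_2^2=-\int(\mathbf u_z\cdot\nabla_h\theta')\theta'_z-\int w_z\bar\theta_z\theta'_z-\int w\,\bar\theta_{zz}\,\theta'_z$, and here there is no $\|\theta'_{zz}\|_2$ to spend, so the vertical regularity must be borrowed entirely from the velocity. I control the first term by an anisotropic estimate that uses only $\nabla_h\mathbf u_z$ (bounded by $\Delta_h\theta'$) and $\nabla_h\theta'_z$ (the dissipation just produced), never the uncontrolled $\mathbf u_{zz}$. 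For the mean-temperature terms I use \eqref{inP-4} in the form $\bar\theta_z=\overline{\theta'w}+c(t)$ and $\bar\theta_{zz}=\overline{\theta'_z w+\theta'w_z}$, combined with one-dimensional Sobolev embeddings in $z$, to express the mean-field factors through $\theta',\theta'_z,w,w_z$, hence through $\theta'$ and $\nabla_h\theta'$. Adding this to the horizontal estimate produces a differential inequality of the form $\frac{d}{dt}\|\theta'\|_{H^1}^2+c\big(\|\Delta_h\theta'\|_2^2+\|\nabla_h\theta'_z\|_2^2\big)\le g(t)\,\Phi(\|\theta'\|_{H^1}^2)$, where $\int_0^\infty g(t)\,dt<\infty$ thanks to $\int_0^\infty\|\nabla_h\theta'\|_2^2\,dt\le\tfrac12\|\theta'_0\|_2^2$ from \eqref{EI} and the exponential decay of $\|\theta'(t)\|_2$. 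A Gronwall (or uniform Gronwall) argument then yields the global $L^\infty_t H^1$ bound together with the $L^2_t$ control of $\Delta_h\theta'$ and $\nabla_h\theta'_z$; feeding these back into the elliptic bounds gives the stated regularity \eqref{reg-2}--\eqref{reg-3} of $\bar\theta$ and $(\mathbf u,w)$, and reading $\theta'_t=\Delta_h\theta'-\mathbf u\cdot\nabla_h\theta'-w\bar\theta_z$ off the equation gives $\theta'_t\in L^2(\Omega\times(0,T))$.

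To make this rigorous I would run a Galerkin scheme in the horizontal Fourier modes, for which the computations above are legitimate and the bounds are uniform in the truncation; compactness then produces a strong solution, whose uniqueness and energy equality follow from Theorem \ref{thm-weak}. I expect the main obstacle to be the vertical estimate: with no vertical diffusion acting on $\theta'$, the bound on $\theta'_z$ can only be closed by exploiting the precise smoothing of the diagnostic system, namely that $\mathbf u_z,w_z$ and $\nabla_h\mathbf u_z$ are controlled by $\nabla_h\theta'$ and $\Delta_h\theta'$ while $\mathbf u_{zz}$ and $\theta'_{zz}$ never appear. Handling the mean-temperature coupling, which transfers the vertical regularity of $\bar\theta$ onto the fluctuation through \eqref{inP-4}, is the other technically demanding point.
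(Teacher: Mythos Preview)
Your plan is essentially the paper's own proof: Galerkin approximation, the $L^2$ energy identity, testing \eqref{inP-3} against $-\Delta_h\theta'$ for the horizontal estimate, differentiating in $z$ and testing against $\theta'_z$ for the vertical estimate, replacing $\bar\theta_{zz}$ via \eqref{inP-4} by $\overline{\theta'_z w}+\overline{\theta' w_z}$ (which produces a good-sign term $-\int_0^{2\pi}(\overline{w\,\theta'_z})^2\,dz$), closing by Gr\"onwall using the time-integrability of $\|\nabla_h\theta'\|_2^2$, $\|\Delta_h\theta'\|_2^2$ and $\int_0^{2\pi}|\bar\theta_z|^2\,dz$, and then passing to the limit; uniqueness and the energy equality are indeed inherited from Theorem~\ref{thm-weak}.

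One small correction: $\mathbf u_{zz}$ and $w_{zz}$ are not uncontrolled. The diagnostic Fourier symbol gives $\|\partial_{zz}\mathbf u\|_2+\|\partial_{zz}w\|_2\le C\,\text{Ra}\,\|\nabla_h\theta'_z\|_2$, i.e.\ exactly the dissipation you have just produced in the vertical step, and the paper in fact uses this bound (applying Lemma~\ref{lemma1} with $f=\partial_z\mathbf u$ and the $\sup_z$ trick with $f=\partial_z w$). Your alternative placement of the $H^1_z$ factor on $\nabla_h\theta'$ in Lemma~\ref{lemma1} leads to the same final inequality, so there is no gap, but you should not regard the second vertical derivatives of the velocity as forbidden.
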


\vspace{0.1 in}

 \section{Preliminaries}   \label{sec-ineq}
We state some inequalities which will be useful in our estimates.  Let $\Omega=[0,2\pi L]^2 \times [0,2\pi]$ be a three-dimensional fundamental periodic domain.

The following is an anisotropic Ladyzhenskaya-type inequality which has been proved in \cite{CGT}.
 \begin{lemma}   \label{lemma1}
Let $f\in H^1(\Omega)$, $\nabla_h g\in L^2(\Omega)$ and $h\in L^2(\Omega)$. Then
\begin{align}    \label{Lady}
\int_{\Omega} |fgh| dx dy dz  \leq C (\|f\|_2+\|\nabla_h f\|_2)^{\frac{1}{2}}  \left(\|f\|_2 + \|f_z\|_2 \right)^{\frac{1}{2}}
 \|g\|_2^{\frac{1}{2}}   (\|g\|_2+\|\nabla_h g\|_2)^{\frac{1}{2}} \|h\|_2.
\end{align}
\end{lemma}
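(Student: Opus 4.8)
The plan is to exploit the strong anisotropy of the right-hand side: full horizontal regularity is allotted to both $f$ and $g$, but only $f$ is permitted a vertical derivative, whereas $h$ enters merely through its $L^2$ norm. Accordingly, I would first freeze the vertical variable and work on a horizontal slice $[0,2\pi L]^2$, applying the two-dimensional H\"older inequality
\[
\int_{[0,2\pi L]^2} |fgh|\,dx\,dy \le \|f(\cdot,\cdot,z)\|_{L^4_{xy}}\,\|g(\cdot,\cdot,z)\|_{L^4_{xy}}\,\|h(\cdot,\cdot,z)\|_{L^2_{xy}},
\]
and then estimate the two $L^4_{xy}$ factors by the two-dimensional Ladyzhenskaya inequality $\|\phi\|_{L^4_{xy}}^2 \le C\|\phi\|_{L^2_{xy}}(\|\phi\|_{L^2_{xy}}+\|\nabla_h \phi\|_{L^2_{xy}})$, applied to $f$ and $g$ on each slice. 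The remaining task is to integrate the resulting slice bound in $z$, and the entire point is to distribute the vertical integrability so that the supremum in $z$ falls \emph{only} on $f$.

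For the factors carrying $g$ and $h$ this is routine. Using the Cauchy--Schwarz inequality in $z$ to pair $\|h(\cdot,\cdot,z)\|_{L^2_{xy}}$ against the $g$-factor, and then the slice Ladyzhenskaya bound together with Cauchy--Schwarz in $z$ once more, one obtains
\[
\Big(\int_0^{2\pi}\|g(\cdot,\cdot,z)\|_{L^4_{xy}}^2\,dz\Big)^{1/2} \le C\,\|g\|_2^{1/2}\big(\|g\|_2+\|\nabla_h g\|_2\big)^{1/2},
\]
which is exactly the $g$-contribution, while $\big(\int_0^{2\pi}\|h(\cdot,\cdot,z)\|_{L^2_{xy}}^2\,dz\big)^{1/2}=\|h\|_2$ supplies the $h$-contribution. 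Crucially, neither step ever differentiates $g$ or $h$ in the vertical direction, as is mandatory since no such derivative is available.

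The hard part is the factor carrying $f$, which must be controlled in the mixed norm $\sup_{z}\|f(\cdot,\cdot,z)\|_{L^4_{xy}}$ by $(\|f\|_2+\|\nabla_h f\|_2)^{1/2}(\|f\|_2+\|f_z\|_2)^{1/2}$. Here I would invoke a one-dimensional Agmon/Gagliardo--Nirenberg inequality in the vertical variable — of the form $\sup_z \Phi(z) \le \tfrac{1}{2\pi}\int_0^{2\pi}\Phi\,dz' + \int_0^{2\pi}|\Phi'|\,dz'$ applied to $\Phi(z)=\|f(\cdot,\cdot,z)\|_{L^4_{xy}}^2$ — so that the vertical supremum is paid for by $f_z$, while the horizontal Ladyzhenskaya estimate supplies $\nabla_h f$. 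This is precisely where I expect the main obstacle: differentiating the fourth-power horizontal norm in $z$ produces a coupling between $f$ and $f_z$ that must be reorganized so as to be dominated by the first-order quantities $\|\nabla_h f\|_2$ and $\|f_z\|_2$ alone, \emph{without} ever calling upon the mixed second derivative $\nabla_h f_z$ (which is not controlled by $f\in H^1(\Omega)$) nor upon an $L^\infty_{xy}$ bound. Carrying out this bookkeeping — balancing the horizontal $L^4_{xy}$--$L^2_{xy}$ interpolation against the one-dimensional vertical embedding so that the estimate closes in exactly the stated anisotropic norms — is the crux of the argument, and is the step I would expect to require the most care.
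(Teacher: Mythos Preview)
The paper does not prove this lemma; it merely cites \cite{CGT}. Your overall framework --- slicing, the horizontal H\"older split $(4,4,2)$, the two-dimensional Ladyzhenskaya inequality for $g$, and the reduction of the $f$-contribution to the bound
\[
\sup_{z}\|f(\cdot,\cdot,z)\|_{L^4_{xy}}\le C\bigl(\|f\|_2+\|\nabla_h f\|_2\bigr)^{1/2}\bigl(\|f\|_2+\|f_z\|_2\bigr)^{1/2}
\]
--- is sound, and your treatment of $g$ and $h$ is complete.

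The genuine gap is in the method you propose for the $f$-bound. Applying the one-dimensional embedding to $\Phi(z)=\|f(\cdot,\cdot,z)\|_{L^4_{xy}}^2$ forces you to control $\int_0^{2\pi}|\Phi'(z)|\,dz$, and
\[
\Phi'(z)=2\,\|f\|_{L^4_{xy}}^{-2}\int_{[0,2\pi L]^2}|f|^2 f\,f_z\,dx\,dy,
\qquad
\bigl|\Phi'(z)\bigr|\le 2\,\|f\,f_z\|_{L^2_{xy}}.
\]
Every H\"older decomposition of $\|f\,f_z\|_{L^2_{xy}}$ lands on either $\|f_z\|_{L^p_{xy}}$ with $p>2$ (hence $\nabla_h f_z$, which is not in $L^2$ for $f\in H^1(\Omega)$) or $\|f\|_{L^\infty_{xy}}$ (also uncontrolled). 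The same obstruction appears if one works with $\|f\|_{L^4_{xy}}^4$ instead. The ``bookkeeping'' you anticipate does not close along this route; this is not merely a matter of care but a structural mismatch between the $L^4_{xy}$ norm and vertical differentiation.

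A way that does close: replace $L^4_{xy}$ by the Hilbert norm $H^{1/2}_{xy}$ via the two-dimensional embedding $\|f(\cdot,\cdot,z)\|_{L^4_{xy}}\le C\|f(\cdot,\cdot,z)\|_{H^{1/2}_{xy}}$. Writing $f=\sum_{k_h}c_{k_h}(z)e^{ik_h\cdot(x,y)/L}$, one has $\|f(\cdot,\cdot,z)\|_{H^{1/2}_{xy}}^2=\sum_{k_h}(1+|k_h|)|c_{k_h}(z)|^2$; apply the one-dimensional Agmon inequality to each $|c_{k_h}(z)|^2$ and then Cauchy--Schwarz in $k_h$ to obtain
\[
\sup_z\|f\|_{H^{1/2}_{xy}}^2\le C\Bigl(\sum_{k_h}(1+|k_h|)^2\|c_{k_h}\|_{L^2_z}^2\Bigr)^{1/2}\Bigl(\sum_{k_h}\|c_{k_h}\|_{H^1_z}^2\Bigr)^{1/2}
\le C\bigl(\|f\|_2+\|\nabla_h f\|_2\bigr)\bigl(\|f\|_2+\|f_z\|_2\bigr).
\]
The point is that a Hilbert-space horizontal norm interacts cleanly with the vertical fundamental theorem of calculus, whereas $L^4_{xy}$ does not. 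Equivalently, one may apply the slice Ladyzhenskaya inequality \emph{first} and take the vertical supremum only of the $\|f\|_{L^2_{xy}}$ factor, which requires solely $f_z$.
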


 \vspace{0.1 in}

\begin{definition}
Let $s\geq 0$. We say $\partial_z^{s} f \in L^2(\Omega)$ if $f\in L^2(\Omega)$ satisfying
\begin{align*}
\|\partial_z^{s}  f \|_2^2 = \sum_{\mathbf k = (k_1,k_2,k_3) \in \mathbb Z^3} |k_3|^{2s} |\hat f (\mathbf k)|^2 < \infty.
\end{align*}
\end{definition}

\begin{lemma}    \label{lemma2}
Let $s>1/2$. Assume $f$, $\partial_z^s f \in L^2(\Omega)$, then
\begin{align}   \label{ineq-2}
\sup_{z\in [0,2\pi]} \int_{[0,2\pi L]^2} |f(x,y,z)|^2 dx dy \leq C(\|f\|_2^2+\|\partial_z^s f\|_2^2).
\end{align}
\end{lemma}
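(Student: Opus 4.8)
The plan is to work entirely on the Fourier side, since $\partial_z^s$ is itself defined through the Fourier coefficients. I would write $f$ as its three-dimensional Fourier series $f(x,y,z) = \sum_{\mathbf k} \hat f(\mathbf k)\, e^{i(k_1 x/L + k_2 y/L + k_3 z)}$ with $\mathbf k = (k_1,k_2,k_3) \in \mathbb Z^3$. For each fixed $z$, Parseval in the horizontal variables gives $\int_{[0,2\pi L]^2} |f(x,y,z)|^2\, dx\, dy = C_L \sum_{(k_1,k_2)} |F_{k_1,k_2}(z)|^2$, where $F_{k_1,k_2}(z) := \sum_{k_3 \in \mathbb Z} \hat f(\mathbf k)\, e^{ik_3 z}$ collects the vertical modes attached to the horizontal frequency $(k_1,k_2)$, and $C_L$ is a fixed normalization constant. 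The whole statement thus reduces to bounding $\sup_z \sum_{(k_1,k_2)} |F_{k_1,k_2}(z)|^2$.

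Next I would estimate each $F_{k_1,k_2}$ in $L^\infty_z$ by a weighted Cauchy--Schwarz inequality: for every $z$,
\begin{align*}
|F_{k_1,k_2}(z)| \le \sum_{k_3} |\hat f(\mathbf k)| \le \Big(\sum_{k_3}(1+|k_3|^{2s}) |\hat f(\mathbf k)|^2 \Big)^{1/2} \Big(\sum_{k_3} (1+|k_3|^{2s})^{-1}\Big)^{1/2}.
\end{align*}
The decisive point is that the second factor, $\sum_{k_3 \in \mathbb Z} (1+|k_3|^{2s})^{-1}$, is a finite constant $C_s$ exactly when $2s > 1$, i.e. precisely under the hypothesis $s > 1/2$; this is the sole place where the assumption enters, and it is what prevents the argument from working at the endpoint $s = 1/2$. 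Incidentally, the same estimate shows $\sum_{k_3} |\hat f(\mathbf k)| < \infty$ whenever the right-hand side of (\ref{ineq-2}) is finite, so each $F_{k_1,k_2}$ is a uniformly convergent, hence continuous, series and its pointwise supremum in $z$ is genuinely attained.

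Finally I would take the supremum over $z$ and sum over the horizontal frequencies, using the elementary inequality $\sup_z \sum_{(k_1,k_2)} |F_{k_1,k_2}(z)|^2 \le \sum_{(k_1,k_2)} \sup_z |F_{k_1,k_2}(z)|^2$ (valid since all terms are nonnegative). Combining this with the previous step,
\begin{align*}
\sup_z \sum_{(k_1,k_2)} |F_{k_1,k_2}(z)|^2 \le C_s \sum_{\mathbf k} (1 + |k_3|^{2s}) |\hat f(\mathbf k)|^2 = C_s\big( \|f\|_2^2 + \|\partial_z^s f\|_2^2\big),
\end{align*}
up to the fixed normalization constants, the last equality being just Parseval together with the definition of $\|\partial_z^s f\|_2$. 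Undoing the horizontal normalization $C_L$ then yields (\ref{ineq-2}) with $C = C_L C_s$.

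The argument is essentially routine once phrased on the Fourier side. The only genuine subtleties are the exchange of $\sup_z$ with the infinite sum over $(k_1,k_2)$, which the nonnegativity-based inequality above handles safely, and the convergence of $\sum_{k_3}(1+|k_3|^{2s})^{-1}$, which is exactly where $s > 1/2$ is needed. I expect no obstacle beyond keeping the various $2\pi$ and $L$ normalization factors consistent.
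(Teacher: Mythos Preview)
Your proof is correct and follows essentially the same approach as the paper: both rely on the one-dimensional Sobolev embedding $H^s(0,2\pi)\hookrightarrow L^\infty(0,2\pi)$ for $s>1/2$ applied in the vertical variable. The paper invokes this embedding directly and integrates pointwise in $(x,y)$, whereas you unfold its Fourier-side proof (the weighted Cauchy--Schwarz with $\sum_{k_3}(1+|k_3|^{2s})^{-1}<\infty$) and organize the horizontal integration via Parseval---a cosmetic rather than substantive difference.
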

\begin{proof}
Thanks to the one-dimensional imbedding $\|\phi\|_{L^{\infty}(0,2\pi)} \leq C\|\phi\|_{H^{s}(0,2\pi)}$, when $s>1/2$, then for a.e. $z\in [0,2\pi]$,
\begin{align*}
\int_{[0,2\pi L]^2} |f(x,y,z)|^2 dx dy
\leq C\int_{[0,2\pi L]^2} \left( \int_0^{2\pi} (|f|^2 + |\partial_z^s f|^2) dz \right) dx dy = C (\|f\|_2^2 + \|\partial_z^s f\|_2^2).
\end{align*}
\end{proof}

Recall the periodic domain $\Omega=[0,2\pi L]^2 \times [0,2\pi]$. For any periodic function $f\in H^1_h(\Omega)$, i.e., $\nabla_h f \in L^2(\Omega)$ and $\bar{f}=0$, the Poincar\'e inequality is valid:
\begin{align}   \label{Poin}
\|f\|_2^2 \leq L^2 \|\nabla_h f\|_2^2.
\end{align}

Next, we state some identities which will be employed in the energy estimate. For sufficiently smooth periodic functions $\mathbf u$, $f$ and $g$ on $\Omega$, such that $\nabla_h \cdot \mathbf{u}=0$, an integration by parts shows
\begin{align}   \label{iden-0}
\int_{\Omega}(\mathbf{u} \cdot \nabla_h f) g \, dx dy dz=-  \int_{\Omega} (\mathbf{u} \cdot \nabla_h g) f \, dx dy dz.
\end{align}
This implies
\begin{align}   \label{iden-1}
\int_{\Omega} (\mathbf{u} \cdot \nabla_h f) f \, dx dy dz=0,
\end{align}
if $\nabla_h \cdot \mathbf{u}=0$.

Note that the horizontal velocity $\mathbf u$, the vertical component $\omega$ of the vorticity, and the horizontal stream function $\psi$ such that $\bar{\psi}=0$ have the following relations:
\begin{align}   \label{iden-4}
\omega=\nabla_h \times \mathbf u=v_x-u_y, \;\;\; \omega=\Delta_h \psi,\;\;\;
\mathbf u = (u,v) =(-\psi_y,\psi_x).
\end{align}

\vspace{0.1 in}

\section{Weak solutions}   \label{sec-weak}
In this section, we prove the global well-posedness of weak solutions to system (\ref{inP-1})-(\ref{inP-5})
by using the Galerkin method.

\subsection{Existence of weak solutions}
\subsubsection{Galerkin approximation}
Let $P_m$ be an orthogonal projection onto lower Fourier modes, namely,
$P_m \phi = \sum_{\substack{     \mathbf{k} \in \mathbb Z^3 \\
|\mathbf k| \leq m    }       }  \hat \phi(\mathbf k) e_{\mathbf k}$. Here,
$e_{\mathbf k} =  \frac{1}{(2\pi)^3 L^2} \exp[{i\left( \frac{k_1x+k_2 y}{L} + k_3 z \right)}]$, $\mathbf{k} \in \mathbb Z^3$, form an orthonormal basis for $L^2(\Omega)$, where $\Omega=[0,2\pi L]^2 \times [0,2\pi]$ is a three-dimensional periodic domain.

We consider the Galerkin approximation for system (\ref{inP-1})-(\ref{inP-5}):
\begin{align}
& \frac{\partial \psi_m}{\partial z} = \text{Ra}   \, \theta'_m + \Delta_h w_m,   \label{inG-1} \\
& - \frac{\partial w_m}{\partial z} = \Delta_h \omega_m,   \label{inG-2}\\
&\frac{\partial \theta'_m}{\partial t}  + P_m(\mathbf u_m \cdot \nabla_h \theta'_m) +  P_m(w_m \frac{\partial \bar{\theta_m}}{\partial z}) = \Delta_h \theta'_m,   \label{inG-3}\\
&\frac{\partial(\bar{\theta'_m w_m})}{\partial z} = \frac{\partial^2 \bar{\theta_m}}{\partial z^2},     \label{inG-4}
\end{align}
such that $\nabla_h \cdot \mathbf u_m=0$, with the initial condition $\theta'_m(0)=P_m \theta'_0$ where $\bar{\theta_0'}=0$.
Also, $\mathbf u_m=(u_m,v_m)=(-\partial_y \psi_m, \partial_x \psi_m)$, and $\omega_m=\Delta_h \psi_m$
such that $\bar{\psi_m}=0$. Moreover, the temperature $\theta_m = \theta_m' +  \bar{\theta_m}$, where the horizontal mean temperature
$\bar{\theta_m} (z,t)= \frac{1}{4\pi^2 L^2} \int_{[0,2\pi L]^2}  \theta_m(x,y,z,t)  dx dy$.
We assume the horizontal average zero condition:
$\bar{\theta_m'}=\bar{w_m}=\bar{\omega_m}=\bar{\psi_m}=0$ and $\bar{\mathbf u_m}=0$.
In addition, we demand the average value of $\theta_m$ is zero:
\begin{align} \label{Gaz}
\int_{\Omega} \theta_m(x,y,z,t) dx dy dz =0, \;\;\text{i.e.}, \;\;\int_0^{2\pi} \bar{\theta_m}(z,t) dz =0,  \;\;  \text{for all}\,\, t\geq 0.
\end{align}
Thus, the unknown functions in the Galerkin system can be expressed as finite sums of Fourier modes:
\begin{align}
&\theta'_m=
\sum_{\substack{     \mathbf{k} = (k_1, k_2, k_3)  \in \mathbb Z^3 \\ k_1^2+k_2^2 \not = 0 \\
|\mathbf k| \leq m    }       }   {\widehat{\theta'_m}(\mathbf{k},t)} e_{\mathbf k},  \;\;\;\;
\theta_m=
\sum_{\substack{     \mathbf{k} = (k_1, k_2, k_3)  \in \mathbb Z^3 \\
0<|\mathbf k| \leq m    }       }   {\widehat{\theta_m}(\mathbf{k},t)} e_{\mathbf k},  \;\;\;\;
  \label{mode-1}   \\
&  \mathbf u_m =
\sum_{\substack{     \mathbf{k} = (k_1, k_2, k_3)\in \mathbb Z^3 \\ k_1^2+k_2^2 \not = 0 \\
|\mathbf k| \leq m    }       }   {\widehat{\mathbf u_m}(\mathbf{k},t)} e_{\mathbf k},   \;\;\;\;
w_m =
\sum_{\substack{     \mathbf{k} = (k_1, k_2, k_3)\in \mathbb Z^3 \\ k_1^2+k_2^2 \not = 0 \\
|\mathbf k| \leq m    }       }   {\widehat{w_m}(\mathbf{k},t)} e_{\mathbf k}.      \label{mode-2}      \\
&\psi_m =
\sum_{\substack{     \mathbf{k} = (k_1, k_2, k_3)  \in \mathbb Z^3 \\ k_1^2+k_2^2 \not = 0 \\
|\mathbf k| \leq m    }       }   {\widehat{\psi_m}(\mathbf{k},t)} e_{\mathbf k}, \;\;\;\;
\omega_m =
\sum_{\substack{     \mathbf{k} = (k_1, k_2, k_3)  \in \mathbb Z^3 \\ k_1^2+k_2^2 \not = 0 \\
|\mathbf k| \leq m    }       }   {\widehat{\omega_m}(\mathbf{k},t)} e_{\mathbf k}.    \label{mode-3}
\end{align}

Since equations (\ref{inG-1})-(\ref{inG-2}) are linear, they can be solved explicitly if $\theta_m'$ is given. Indeed, since $\omega_m=\Delta_h \psi_m$, we have
\begin{align*}
&ik_3 {\widehat{\psi_m}(\mathbf{k})} = \text{Ra} \, {\widehat{\theta'_m}(\mathbf{k})} - \left(\frac{k_1^2+k_2^2}{L^2}\right) {\widehat{w_m}(\mathbf{k})}  \\
&-ik_3 {\widehat{w_m}(\mathbf{k})} =  \left(\frac{k_1^2+k_2^2}{L^2}\right)^2 {\widehat{\psi_m}(\mathbf{k})}.
\end{align*}
The above linear system can be written as
\begin{align}     \label{matrix}
\begin{pmatrix}
ik_3 &  \frac{k_1^2+k_2^2}{L^2} \\
-(\frac{k_1^2+k_2^2}{L^2})^2   &    -i k_3
\end{pmatrix}
\begin{pmatrix}
{\widehat{\psi_m}(\mathbf{k})}  \\
{\widehat{w_m}(\mathbf{k})}
\end{pmatrix}
=
\text{Ra}
\begin{pmatrix}
{\widehat{\theta'_m}(\mathbf{k})} \\
0
\end{pmatrix}.
\end{align}
For the $2\times 2$ matrix in (\ref{matrix}), its determinant $k_3^2 + (\frac{k_1^2+k_2^2}{L^2})^3>0$ because $k_1^2+k_2^2 \not=0$. Thus we can take the inverse of this matrix. It follows that
\begin{align*}
&\begin{pmatrix}
{\widehat{\psi_m}(\mathbf{k})}  \\
{\widehat{w_m}(\mathbf{k})}
\end{pmatrix}
=  \text{Ra}  \begin{pmatrix}
ik_3 &  \frac{k_1^2+k_2^2}{L^2} \\
-(\frac{k_1^2+k_2^2}{L^2})^2   &    -i k_3
\end{pmatrix}^{-1}
\begin{pmatrix}
{\widehat{\theta'_m}(\mathbf{k})} \\
0
\end{pmatrix} \notag\\
&= \frac{ \text{Ra} }{k_3^2 +   (\frac{k_1^2+k_2^2}{L^2})^3 }
\begin{pmatrix}
-ik_3 & - \frac{k_1^2+k_2^2}{L^2}\\
(\frac{k_1^2+k_2^2}{L^2})^2   &    i k_3
\end{pmatrix}
\begin{pmatrix}
{\widehat{\theta'_m}(\mathbf{k})} \\
0
\end{pmatrix}  =\frac{   \text{Ra}  }{k_3^2 + (\frac{k_1^2+k_2^2}{L^2})^3}
\begin{pmatrix}
-ik_3 \\
(\frac{k_1^2+k_2^2}{L^2})^2
\end{pmatrix}
 {\widehat{\theta'_m}(\mathbf{k})}.
\end{align*}
Consequently,
\begin{align}
{\widehat{\psi_m}(\mathbf{k},t)}= \text{Ra}   \left(\frac{-i  k_3}{k_3^2 + (\frac{k_1^2+k_2^2}{L^2})^3}\right)     {\widehat{\theta'_m}(\mathbf{k},t)},     \label{FourG-1}   \\
{\widehat{w_m}(\mathbf{k},t)} =    \text{Ra}  \left(\frac{(\frac{k_1^2+k_2^2}{L^2})^2}{k_3^2 + (\frac{k_1^2+k_2^2}{L^2})^3} \right)  {\widehat{\theta'_m}(\mathbf{k},t)}.      \label{FourG-2}
\end{align}
Furthermore, since $\mathbf u_m=(u_m,v_m)=(-\partial_y \psi_m, \partial_x \psi_m)$, we obtain from (\ref{FourG-1}) that
\begin{align}
&{\widehat{u_m}(\mathbf{k},t)} =   \text{Ra}   \left(\frac{-\frac{k_2}{L} k_3}{k_3^2 + (\frac{k_1^2+k_2^2}{L^2})^3}\right)     {\widehat{\theta'_m}(\mathbf{k},t)}, \;\;\;\;
{\widehat{v_m}(\mathbf{k},t)} =   \text{Ra}   \left(\frac{\frac{k_1}{L} k_3}{k_3^2 + (\frac{k_1^2+k_2^2}{L^2})^3}\right)     {\widehat{\theta'_m}(\mathbf{k},t)}.     \label{FourG-3}
\end{align}

Due to (\ref{inG-4}), we have $\frac{\partial \bar{\theta_m}}{\partial z} =  \bar{\theta'_m w_m} + c(t)$,
for some constant $c(t)$ depending only on $t$. Then, since $\bar{\theta_m}$ is periodic on $[0,2\pi]$, we obtain
  $0=\bar{\theta_m}(2\pi) -  \bar{\theta_m}(0) =   \int_0^{2\pi} \bar{\theta'_m w_m} dz+  c(t) 2\pi$,
  which implies that $c(t) =   - \frac{1}{2\pi}  \int_0^{2\pi} \bar{\theta'_m w_m} dz $.
 Therefore,
\begin{align}  \label{inG-4'}
 \frac{\partial \bar{\theta_m}}{\partial z} =  \bar{\theta'_m w_m} - \frac{1}{2\pi}  \int_0^{2\pi} \bar{\theta'_m w_m} dz.
  \end{align}

By substituting (\ref{FourG-2})-(\ref{FourG-3}) and (\ref{inG-4'}) into equation (\ref{inG-3}), we obtain a system of first order nonlinear ordinary differential equations with unknowns $\{\widehat{\theta'_m}(\mathbf k,t): \mathbf{k}\in \mathbb Z^3, k_1^2+k_2^2\not=0, |\mathbf k| \leq m\}.$ By the classical theory of ordinary differential equations, for each $m\in \mathbb N$, there exists a solution $\{\widehat{\theta'_m}(\mathbf k,t): |\mathbf k|\leq m\}$ defined on $[0,T_m^{\text{max}})$ for the system of ODEs.
Then, thanks to (\ref{FourG-1})-(\ref{FourG-3}), we obtain $\widehat{\psi_m}(\mathbf{k},t)$, $\widehat{u_m}(\mathbf{k},t)$, $\widehat{v_m}(\mathbf{k},t)$ and $\widehat{w_m}(\mathbf{k},t)$, for $|\mathbf k| \leq m$.
Next, we substitute $\theta_m'$ and $w_m$ into (\ref{inG-4'}) to get $\frac{\partial \bar {\theta_m}}{\partial z}$, and along with the assumption $\int_0^{2\pi} \bar{\theta_m}(z,t) dz =0$ from (\ref{Gaz}), we obtain $\bar{\theta_m}$. Finally, $\theta_m = \theta_m' + \bar{\theta_m}$ which satisfies $\int_{\Omega} \theta_m dx dy dz =0$.

Assume the ODE system has finite time of existence, i.e., $T_m^{\text{max}}<\infty$.
By estimate (\ref{wk-0}) below, we know that
$\|\theta'_m(t)\|_2^2   \leq   \|\theta'_m(0)\|_2^2 \leq \|\theta'_0\|_2^2$ for all $t\in [0,T_m^{\text{max}})$.
Therefore, $\theta'_m(t)$ can be extended beyond the finite time $T_m^{\text{max}}$, which is a contradiction.
It follows that $T_m^{\text{max}} = \infty$. As a result, for every $m\in \mathbb N$, the Galerkin system (\ref{inG-1})-(\ref{inG-4}) has a global solution on $[0,\infty)$.

\subsubsection{Energy estimate}   \label{sec-energy}
Taking the inner product of (\ref{inG-3}) with $\theta'_m$ and using (\ref{iden-1}), one has
\begin{align}    \label{wk-theta-1}
\frac{1}{2} \frac{d}{dt} \|\theta'_m\|_2^2 +  \|\nabla_h \theta'_m\|_2^2 +
\int_{\Omega} [P_m(w_m \frac{\partial \bar{\theta_m}}{\partial z})] \theta'_m dx dy dz =0,  \;\;  \text{for all} \; t\geq 0.
\end{align}

Recall that the horizontal mean of a function $f$ is defined as $\bar f(z)=  \frac{1}{4 \pi^2 L^2}\int_{[0, 2\pi L]^2} f(x,y,z) dx dy$.
Using $\frac{\partial(\bar{\theta'_m w_m})}{\partial z} = \frac{\partial^2 \bar{\theta_m}}{\partial z^2}$ from equation (\ref{inG-4}), we find that
\begin{align*}
&\int_{\Omega} [P_m(w_m \frac{\partial \bar{\theta_m}}{\partial z})] \theta'_m dx dy dz=\int_{\Omega} w_m \frac{\partial \bar{\theta_m}}{\partial z} \theta'_m dx dy dz
=  4\pi^2 L^2  \int_0^{2\pi} (\bar{w_m \theta'_m})   \frac{\partial \bar{\theta_m}}{\partial z}  dz    \notag\\
&= -   4\pi^2 L^2  \int_0^{2\pi}   \frac{\partial (\bar{w_m \theta'_m})}{\partial z}  \bar{\theta_m} dz
=-  4\pi^2 L^2   \int_0^{2\pi} \left(\frac{\partial^2 \bar{\theta_m}}{\partial z^2}\right)  \bar{\theta_m} dz
=   4\pi^2 L^2  \int_0^{2\pi}  \left| \frac{\partial \bar{\theta_m}}{\partial z}\right|^2 dz,
\end{align*}
for all $t\geq 0$. Therefore,
\begin{align} \label{wk-in-bL2}
\frac{1}{2} \frac{d}{dt} \|\theta'_m\|_2^2 +  \|\nabla_h \theta'_m\|_2^2
+4\pi^2 L^2  \int_0^{2\pi}  \left| \frac{\partial \bar{\theta_m}}{\partial z}\right|^2 dz =0,  \;\;  \text{for all} \; t\geq 0.
\end{align}
Integrating over $[0,t]$ yields
\begin{align}  \label{wk-0}
&\frac{1}{2}  \|\theta'_m(t)\|_2^2   + \int_0^t     \|\nabla_h \theta'_m (s)\|_2^2 ds +
4\pi^2 L^2   \int_0^t    \int_0^{2\pi}  \left|\frac{\partial \bar{\theta_m}(z,s)}{\partial z}\right|^2 dz ds   \notag\\
&= \frac{1}{2}   \|\theta'_m(0)\|_2^2  \leq   \frac{1}{2}   \|\theta'_0\|_2^2,  \;\;  \text{for all} \; t\geq 0.
\end{align}

Next, we estimate $\mathbf u_m$ and $w_m$. By (\ref{FourG-3}), we calculate
\begin{align}  \label{wk-1}
\|\Delta_h \mathbf u_m\|_2^2
&=  \text{Ra}^2    \sum_{\substack{\mathbf{k}\in \mathbb Z^3\\
                  k_1^2 + k_2^2 \not =0   }    }
\frac{     \left|\frac{k_1^2+k_2^2}{L^2}\right|^3  k_3^2     }{  \left|k_3^2 + (\frac{k_1^2+k_2^2}{L^2})^3\right|^2  }    |{\widehat{\theta'_m}(\mathbf{k})}|^2
\leq  \text{Ra}^2  \|\theta'_m\|_2^2.
\end{align}
Furthermore,
\begin{align}   \label{wk-2}
\|\partial_z \mathbf u_m\|_2^2 =    \text{Ra}^2     \sum_{\substack{\mathbf{k}\in \mathbb Z^3\\
                  k_1^2 + k_2^2 \not =0   }    }
\left|\frac{k_1^2+k_2^2}{L^2}\right|  \left|\frac{ k_3^2} {k_3^2 + (\frac{k_1^2+k_2^2}{L^2})^3}\right|^2     |{\widehat{\theta'_m}(\mathbf{k})}|^2
\leq \text{Ra}^2 \|\nabla_h \theta'_m\|_2^2.
\end{align}
In addition, by using Young's inequality, one has
\begin{align}  \label{wk-2'}
\|\partial_z^{\frac{2}{3}} \mathbf u_m\|_2^2 =   \text{Ra}^2  \sum_{\substack{\mathbf{k}\in \mathbb Z^3\\
                  k_1^2 + k_2^2 \not =0   }    }
\left|\frac{    (\frac{k_1^2 + k_2^2}{L^2})^{1/2}    k_3^{5/3}}{k_3^2 +  (\frac{k_1^2 + k_2^2}{L^2})^3} \right|^2
|\widehat{\theta'_m}(\mathbf k)|^2
\leq  \text{Ra}^2    \|\theta'_m\|_2^2.
\end{align}

By (\ref{FourG-2}), we see that
\begin{align}      \label{wk-3}
 \|\Delta_h w_m\|_2^2
=     \text{Ra}^2    \sum_{\substack{\mathbf{k}\in \mathbb Z^3\\
                  k_1^2 + k_2^2 \not =0   }    }
     \left|\frac{(\frac{k_1^2+k_2^2}{L^2})^3}{k_3^2 + (\frac{k_1^2+k_2^2}{L^2})^3} \right|^2 |\widehat {\theta'_m}(\mathbf{k})|^2
\leq     \text{Ra}^2  \|\theta'_m\|_2^2.
\end{align}
Also, using (\ref{FourG-2}), one has
\begin{align}   \label{wk-4}
\|\partial_z w_m\|_2^2 &=  \text{Ra}^2    \sum_{\substack{\mathbf{k}\in \mathbb Z^3\\
                  k_1^2 + k_2^2 \not =0   }    }
\left|\frac{(\frac{k_1^2+k_2^2}{L^2})^2 k_3}{k_3^2 + (\frac{k_1^2+k_2^2}{L^2})^3} \right|^2
  |{\widehat{\theta'_m}(\mathbf{k})}|^2  \notag\\
&= \text{Ra}^2     \sum_{\substack{\mathbf{k}\in \mathbb Z^3\\
                  k_1^2 + k_2^2 \not =0   }    }
                  \left|\frac{(\frac{k_1^2+k_2^2}{L^2})^{\frac{3}{2}} k_3}{k_3^2 + (\frac{k_1^2+k_2^2}{L^2})^3} \right|^2  \left|\frac{k_1^2+k_2^2}{L^2} \right|      | {\widehat{\theta'_m}(\mathbf{k})}|^2    \notag\\
&\leq  \text{Ra}^2   \sum_{\substack{\mathbf{k}\in \mathbb Z^3\\
                  k_1^2 + k_2^2 \not =0   }    }
                  \left|\frac{k_1^2+k_2^2}{L^2} \right|      | {\widehat{\theta'_m}(\mathbf{k})}|^2
=  \text{Ra}^2 \|\nabla_h \theta'_m\|_2^2.
\end{align}
Moreover, applying Young's inequality, we obtain
\begin{align}   \label{wk-4'}
\|\partial_z^{\frac{2}{3}}w_m\|_2^2 =   \text{Ra}^2
  \sum_{\substack{\mathbf{k}\in \mathbb Z^3\\
                  k_1^2 + k_2^2 \not =0   }    }
  \left|\frac{  |k_3|^{2/3}   (\frac{k_1^2+k_2^2}{L^2})^2}{k_3^2 + (\frac{k_1^2+k_2^2}{L^2})^3} \right|^2     |{\widehat{\theta'_m}(\mathbf{k})}|^2
  \leq \text{Ra}^2 \|\theta'_m\|_2^2.
  \end{align}

Let us fix an arbitrary time $T>0$.

Owing to estimates (\ref{wk-0})-(\ref{wk-2}), (\ref{wk-3})-(\ref{wk-4}) and
using the linear equations (\ref{inG-1})-(\ref{inG-2}), we conclude
\begin{align}
&\theta_m'  \;\; \text{is uniformly bounded in} \;\;   L^{\infty}(0,T;L^2(\Omega))\cap  L^2(0,T;H^1_h(\Omega));  \label{wk-5}  \\
& \partial_z \bar{\theta_m}   \;\; \text{is uniformly bounded in} \;\;  L^2((0,2\pi)  \times (0,T));   \label{wk-66} \\
& \mathbf u_m,  \; w_m    \;\; \text{are uniformly bounded in} \;\;   L^2(0,T;H^1(\Omega));  \label{wk-6}   \\
& \Delta_h \omega  \;\; \text{is uniformly bounded in} \;\;   L^2(\Omega \times (0,T));  \label{wk-6''} \\
& \Delta_h \mathbf u_m,  \; \Delta_h w_m, \; \partial_z \psi_m     \;\; \text{are uniformly bounded in} \;\;   L^{\infty}(0,T;L^2(\Omega))  \cap  L^2(0,T;H^1_h(\Omega)) . \label{wk-6'}
\end{align}

Therefore, on a subsequence, we have the following weak convergences as $m\rightarrow \infty$:
\begin{align}
&\theta_m'  \rightarrow   \theta',  \;  \Delta_h \mathbf u_m  \rightarrow \Delta_h \mathbf u,   \;  \Delta_h w_m \rightarrow \Delta_h w,    \; \partial_z \psi_m \rightarrow \psi_z\;   \text{weakly}^* \;\text{in} \;     L^{\infty}(0,T;L^2(\Omega));   \label{wk-7} \\
&  \theta_m'  \rightarrow   \theta',  \;  \Delta_h \mathbf u_m  \rightarrow \Delta_h \mathbf u,   \;  \Delta_h w_m \rightarrow \Delta_h w,    \; \partial_z \psi_m \rightarrow \psi_z\; \; \text{weakly in} \;       L^2(0,T;H^1_h(\Omega));    \label{wk-8} \\
&\mathbf u_m\rightarrow \mathbf u,  \; w_m\rightarrow w   \;\; \text{weakly in} \;\;   L^2(0,T;H^1(\Omega)); \label{wk-9}  \\
&\Delta_h \omega_m \rightarrow \Delta_h \omega  \;\; \text{weakly in} \;\; L^2(\Omega \times (0,T));   \label{wk-9'} \\
&  \partial_z \bar{\theta_m}     \rightarrow   \bar{\theta}_z        \;\; \text{weakly in} \;\;
L^2((0,2\pi) \times (0,T)). \label{wk-99}
\end{align}

Using these weak convergences and inequality (\ref{wk-0}), we obtain the energy inequality:
\begin{align}   \label{wk-00}
\frac{1}{2}  \|\theta'(t)\|_2^2   + \int_0^t     \|\nabla_h \theta'(s)\|_2^2 ds +
4\pi^2 L^2   \int_0^t    \int_0^{2\pi}  \left|\frac{\partial \bar{\theta}(z,s)}{\partial z}\right|^2 dz ds  \leq   \frac{1}{2}   \|\theta'_0\|_2^2,
\end{align}
for all $t\in [0,T]$.

Also, using these weak convergences, we can pass to the limit for the linear equations (\ref{inG-1})-(\ref{inG-2}) in the Galerkin approximation system to obtain
\begin{align}
& \frac{\partial \psi}{\partial z} = \text{Ra} \, \theta' + \Delta_h w,     \;\; \text{in}  \;\; L^2(0,T;H^1_h(\Omega)) \cap  L^{\infty}(0,T;L^2(\Omega)),    \label{plin1} \\
& - \frac{\partial w}{\partial z} = \Delta_h \omega,     \;\; \text{in}  \;\; L^2(\Omega \times (0,T)).  \label{plin2}
\end{align}
Then we can use the same calculations as (\ref{wk-1})-(\ref{wk-4'}), to derive that
\begin{align}
&\|\Delta_h \mathbf u\|_2^2  +    \|\Delta_h w\|^2_2 \leq 2\text{Ra}^2\|\theta'\|_2^2    \;\; \text{and}  \;\;
\|\mathbf u_z\|_2^2  +    \|w_z\|^2_2  \leq  2\text{Ra}^2 \|\nabla_h \theta'\|_2^2,    \label{linn-1}\\
&  \|\partial_z^{\frac{2}{3}}\mathbf u\|_2^2  +    \|\partial_z^{\frac{2}{3}}w\|^2_2  \leq  2\text{Ra}^2 \|\theta'\|_2^2,\label{linn-2}
\end{align}
for all $t\in [0,T]$.

\subsubsection{Passage to the limit}  \label{sec-pass}
In order to pass to the limit for the nonlinear equation (\ref{inG-3}) in the Galerkin approximation system, we shall derive certain strong convergence,
besides the already known weak convergences (\ref{wk-7})-(\ref{wk-99}). To this purpose, one has to find a uniform bound for the time derivative $\partial_t \theta'_m$, in a certain function space. From equation (\ref{inG-3}), we know
\begin{align}  \label{wk-10}
\frac{\partial \theta'_m}{\partial t}  = -P_m(\mathbf u_m \cdot \nabla_h \theta'_m) -  P_m(w_m \frac{\partial \bar{\theta_m}}{\partial z}) + \Delta_h \theta'_m.
\end{align}
We aim to find a uniform bound for each term on the right-hand side of (\ref{wk-10}).

For any $\varphi \in L^2(0,T;H^1_h(\Omega))$, applying identity (\ref{iden-0}) and Lemma \ref{lemma1}, we estimate
\begin{align}    \label{wk-12}
&\int_0^T \int_{\Omega} [P_m(\mathbf u_m \cdot \nabla_h \theta'_m)] \varphi dx dy dz dt
= -\int_0^T \int_{\Omega} (\mathbf u_m \cdot \nabla_h P_m \varphi) \theta'_m dx dy dz dt \notag\\
&\leq C\int_0^T  \|\nabla_h \mathbf u_m\|^{1/2} \left( \|\mathbf u_m\|_2 + \|\partial_z \mathbf u_m\|_2 \right)^{1/2} \|\theta'_m\|_2^{1/2}
\|\nabla_h \theta'_m\|_2^{1/2} \|\nabla_h P_m \varphi\|_2  dt \notag\\
&\leq    C\int_0^T  \|\theta'_m\|_2
\|\nabla_h \theta'_m\|_2 \|\nabla_h \varphi\|_2  dt   \notag\\
&\leq C \|\theta_0'\|_2    \left(\int_0^T   \|\nabla_h \theta'_m(t)\|_2^2 dt\right)^{1/2}
\left(\int_0^T \|\nabla_h \varphi(t)\|_2^2  dt\right)^{1/2} \notag\\
&\leq C \|\theta_0'\|_2^2  \|\varphi\|_{L^2(0,T;H^1_h(\Omega))},
\end{align}
where we have used estimates (\ref{wk-0}), (\ref{wk-1}) and (\ref{wk-2}).

Hence
\begin{align}   \label{wk-13}
\|P_m (\mathbf u_m \cdot \nabla_h \theta'_m)\|_{L^2(0,T;(H^1_h(\Omega))')} \leq C \|\theta_0'\|_2^2.
\end{align}

Furthermore, for any function $\phi \in L^2(\Omega \times (0,T))$,
\begin{align}   \label{wk-14}
&\int_0^T \int_{\Omega} [P_m(w_m \frac{\partial \bar{\theta_m}}{\partial z})]  \phi dx dy dz dt
=\int_0^T  \int_0^{2\pi} \Big(\int_{[0,2\pi L]^2} w_m (P_m \phi) dx dy\Big)   \frac{\partial \bar{\theta_m}}{\partial z}    dz dt
\notag\\
&\leq \int_0^T  \int_0^{2\pi}     \Big(\int_{[0,2\pi L]^2} |w_m|^2  dx dy\Big)^{1/2}
 \Big(\int_{[0,2\pi L]^2} |P_m \phi|^2 dx dy\Big)^{1/2}
 \Big|\frac{\partial \bar{\theta_m}}{\partial z}   \Big| dz dt         \notag\\
&\leq  C\int_0^T  \Big[\sup_{z\in [0,2\pi]} \Big(\int_{[0,2\pi L]^2} |w_m|^2 dx dy \Big)^{1/2}\Big]
\|P_m \phi\|_2 \Big( \int_0^{2\pi }  |\partial_z \bar{\theta_m}|^2 dz\Big)^{1/2}    dt \notag\\
&\leq   C\int_0^T  ( \|w_m\|_2 + \|\partial_z^{\frac{2}{3}} w_m\|_2  )       \|\phi\|_2
\left( \int_0^{2\pi }  |\partial_z \bar{\theta_m}|^2 dz\right)^{1/2}   dt \notag\\
&\leq   C\int_0^T  \|\theta'_m\|_2      \|\phi\|_2
\left( \int_0^{2\pi }  |\partial_z \bar{\theta_m}|^2 dz\right)^{1/2}   dt \notag\\
&\leq   C \|\theta_0'\|_2    \left(\int_0^T\|\phi\|_2^2 dt\right)^{1/2}     \left(\int_0^T  \int_0^{2\pi }        |\partial_z \bar{\theta_m}|^2 dz  dt\right)^{1/2}  \notag\\
&\leq   C  \|\theta_0'\|_2^2  \|\phi\|_{L^2(\Omega \times (0,T))},
\end{align}
where we have used Lemma \ref{lemma2} and estimates (\ref{wk-0}), (\ref{wk-3}) and (\ref{wk-4'}).

Thus
\begin{align}   \label{wk-15}
\|P_m(w_m  \frac{\partial  \bar{\theta_m}}{\partial z})\|_{L^2(\Omega \times (0,T))} \leq C\|\theta_0\|_2^2.
\end{align}

Also, it is easy to verify that $\|\Delta_h \theta_m'\|_{L^2(0,T;(H^1_h(\Omega))')}  \leq \|\theta_0'\|_2$. Therefore, owing to (\ref{wk-10}), (\ref{wk-13}) and (\ref{wk-15}), we obtain
\begin{align}  \label{wk-16}
\|\partial_t \theta_m'\|_{L^2(0,T;(H^1_h(\Omega))')} \leq C\|\theta_0'\|_2^2 +\|\theta_0'\|_2.
\end{align}
By the uniform bound (\ref{wk-16}), it follows that, there exists a subsequence satisfying
\begin{align}    \label{wk-177}
\partial_t \theta_m'  \rightarrow   \partial_t \theta'   \;\; \text{weakly in}  \;\; L^2(0,T;(H^1_h(\Omega))').
\end{align}
Using (\ref{FourG-2}), one has
\begin{align}   \label{wk-19}
&\|\partial_t w_m\|_2^2 =  \sum_{\substack{     \mathbf{k} = (k_1, k_2, k_3)  \in \mathbb Z^3 \\ k_1^2+k_2^2 \not = 0 \\
|\mathbf k| \leq m    }       }    |\partial_t \widehat{w_m}(\mathbf k,t)|^2
= \text{Ra}^2  \sum_{\substack{     \mathbf{k} = (k_1, k_2, k_3)  \in \mathbb Z^3 \\ k_1^2+k_2^2 \not = 0 \\
|\mathbf k| \leq m    }       }    \left|\frac{(\frac{k_1^2+k_2^2}{L^2})^{2}}{k_3^2 + (\frac{k_1^2+k_2^2}{L^2})^3} \right|^2  |{\partial_t \widehat{\theta'_m}(\mathbf{k},t)}|^2   \notag\\
&\leq  \text{Ra}^2  \sum_{\substack{     \mathbf{k} = (k_1, k_2, k_3)  \in \mathbb Z^3 \\ k_1^2+k_2^2 \not = 0 \\
|\mathbf k| \leq m    }       }   \left|\frac{L^2}{k_1^2 + k_2^2} \right|^2 |{\partial_t \widehat{\theta'_m}(\mathbf{k},t)}|^2
\leq     \text{Ra}^2  L^2 \sum_{\substack{     \mathbf{k} = (k_1, k_2, k_3)  \in \mathbb Z^3 \\ k_1^2+k_2^2 \not = 0 \\
|\mathbf k| \leq m    }       }   \frac{L^2}{k_1^2 + k_2^2} |{\partial_t \widehat{\theta'_m}(\mathbf{k},t)}|^2 \notag\\
&\leq C \|\partial_t \theta'_m\|_{(H^1_h(\Omega))'}^2.
\end{align}
Due to (\ref{wk-16}) and (\ref{wk-19}), we obtain
\begin{align}     \label{wk-21}
\partial_t w_m \;\; \text{is uniformly bounded in}  \;\; L^2(\Omega \times (0,T)).
\end{align}
In a similar manner, one can show
\begin{align}    \label{wk-22}
\partial_t \mathbf u_m \;\; \text{is uniformly bounded in}  \;\; L^2(\Omega \times (0,T)).
\end{align}

By virtue of (\ref{wk-6}), (\ref{wk-21}) and (\ref{wk-22}) and using Aubin Compactness Theorem (see, e.g., \cite{Temam}), one can extract a subsequence such that
\begin{align}     \label{wk-23}
\mathbf u_m\rightarrow \mathbf u,  \; w_m\rightarrow w  \;\; \text{strongly in} \;\;   L^2(\Omega \times (0,T)).
\end{align}

Now we can pass to the limit for the nonlinear terms of equation (\ref{inG-3}) in the Galerkin system.
Indeed, let
$\eta=\sum_{|\mathbf k|\leq N} \hat{\eta}(\mathbf k,t) e_{\mathbf k}$ be a trigonometric polynomial with continuous coefficients,
where $e_{\mathbf k}=  \frac{1}{(2\pi)^3 L^2} \exp[{i\left( \frac{k_1x+k_2 y}{L} + k_3 z \right)}]$, $\mathbf k=(k_1,k_2,k_3) \in \mathbb Z^3$. Then,
\begin{align}    \label{wk-24}
&\lim_{m\rightarrow \infty}\int_0^T \int_{\Omega} (P_m(\mathbf u_m \cdot \nabla_h \theta'_m)) \eta dx dy dz dt
= \lim_{m\rightarrow \infty}\int_0^T \int_{\Omega} (\mathbf u_m \cdot \nabla_h \theta'_m) \eta dx dy dz dt
  \notag\\
&=\lim_{m \rightarrow \infty} \int_0^T \int_{\Omega} ((\mathbf u_m - \mathbf u) \cdot \nabla_h \theta'_m) \eta dx dy dz dt
+ \lim_{m\rightarrow \infty} \int_0^T \int_{\Omega} (\mathbf u \cdot \nabla_h \theta'_m) \eta dx dy dz dt \notag\\
&=\int_0^T \int_{\Omega} (\mathbf u \cdot \nabla_h \theta') \eta dx dy dz dt,
\end{align}
by virtue of the fact that $\mathbf u_m \rightarrow \mathbf u$ in $L^2(\Omega \times (0,T))$, and the fact that $\nabla_h \theta'_m$ is uniformly bounded in $L^2(\Omega \times (0,T))$, as well as the fact that
$\nabla_h \theta'_m \rightarrow \nabla_h \theta'$ weakly in $L^2(\Omega \times (0,T))$.

For the other nonlinear term in (\ref{inG-3}), we have
\begin{align}    \label{wk-25}
&\lim_{m\rightarrow \infty}  \int_0^T \int_{\Omega} P_m\Big(w_m \frac{\partial{\bar{\theta_m}}}{\partial z}  \Big) \eta dx dy dz dt
=  \lim_{m\rightarrow \infty}  \int_0^T \int_{\Omega} \Big(w_m \frac{\partial{\bar{\theta_m}}}{\partial z}  \Big) \eta dx dy dz dt
\notag\\
&=   \lim_{m\rightarrow \infty}    \int_0^T \int_{\Omega} (w_m -w)   \frac{\partial{\bar{\theta_m}}}{\partial z} \eta dx dy dz dt
+\lim_{m\rightarrow \infty} \int_0^T \int_{\Omega} w \frac{\partial{\bar{\theta_m}}}{\partial z} \eta dx dy dz dt \notag\\
&=   \int_0^T \int_{\Omega} w \frac{\partial{\bar{\theta}}}{\partial z} \eta dx dy dz dt,
\end{align}
due to the fact that $w_m \rightarrow w$ strongly in $L^2(\Omega \times (0,T))$, and the fact that $\frac{\partial{\bar{\theta_m}}}{\partial z} $ is uniformly bounded in  $L^2((0,2\pi) \times (0,T))$, along with the fact that
$\frac{\partial{\bar{\theta_m}}}{\partial z}  \rightarrow   \frac{\partial{\bar{\theta}}}{\partial z} $ weakly in $L^2((0,2\pi) \times (0,T))$.

Therefore, we have
\begin{align}  \label{wk-255}
\int_0^T \int_{\Omega} (\frac{\partial \theta'}{\partial t}  + \mathbf u \cdot \nabla_h \theta' +  w \frac{\partial \bar{\theta}}{\partial z} - \Delta_h \theta') \eta dx dy dz dt =0,
\end{align}
for any trigonometric polynomial $\eta$ with continuous coefficients.

Using similar estimates as (\ref{wk-12}) and (\ref{wk-14}), we can derive
$\mathbf u \cdot \nabla_h \theta' \in  L^2(0,T;(H^1_h(\Omega))')$
and $ w \frac{\partial \bar{\theta}}{\partial z} \in L^2(\Omega \times (0,T))$, thus $\frac{\partial \theta'}{\partial t}  + \mathbf u \cdot \nabla_h \theta' +  w \frac{\partial \bar{\theta}}{\partial z} - \Delta_h \theta'\in L^2(0,T;(H^1_h(\Omega))')$. Then, we conclude from (\ref{wk-255}) that
\begin{align}    \label{heat}
\frac{\partial \theta'}{\partial t}  + \mathbf u \cdot \nabla_h \theta' +  w \frac{\partial \bar{\theta}}{\partial z} - \Delta_h \theta' = 0,    \;\; \text{in} \;   L^2(0,T;(H^1_h(\Omega))').
\end{align}

Next we aim to pass to the limit for the nonlinear term $\frac{\partial(\bar{\theta_m'  w_m})}{\partial z}$ in equation (\ref{inG-4}). To this end, we shall first show
$\frac{\partial(\bar{\theta'  w})}{\partial z}  \in L^2(0,T; H^{-1}(0,2\pi))$. Indeed, for any $\varphi\in L^2(0,T;\dot H^1(0,2\pi))$, we use Lemma \ref{lemma2} as well as estimates (\ref{wk-00}) and (\ref{linn-1}), to derive
\begin{align*}
&\int_0^{T} \int_0^{2\pi}  (\bar{\theta'  w})  \varphi_z dz dt
=  \frac{1}{4\pi^2 L^2}  \int_0^T   \int_0^{2\pi}  \Big(\int_{[0,2\pi L]^2} \theta' w dx dy\Big) \varphi_z dz dt  \notag\\
&\leq C   \int_0^T \int_0^{2\pi} \Big(\int_{[0, 2\pi L]^2} |\theta'|^2 dx dy \Big)^{1/2}
\Big(\int_{[0, 2\pi L]^2} |w|^2 dx dy \Big)^{1/2}   |\varphi_z| dz dt  \notag\\
&\leq  C\int_0^T  \sup_{z\in [0,2\pi]} \Big(\int_{[0,2\pi L]^2} |w|^2 dx dy\Big)^{1/2}  \|\theta'\|_2
\Big(\int_0^{2\pi} |\varphi_z|^2 dz\Big)^{1/2} dt  \notag\\
&\leq C \|\theta'_0\|_2  \int_0^T    (\|w\|_2 + \|w_z\|_2)   \Big(\int_0^{2\pi} |\varphi_z|^2 dz\Big)^{1/2} dt     \notag\\
&\leq C  \|\theta'_0\|_2  \int_0^T  \|\nabla_h \theta'\|_2   \Big(\int_0^{2\pi} |\varphi_z|^2 dz\Big)^{1/2} dt    \notag\\
&\leq C  \|\theta'_0\|_2  \Big( \int_0^T \|\nabla_h \theta'\|_2^2 dt   \Big)^{1/2}  \Big( \int_0^T \int_0^{2\pi} |\varphi_z|^2 dz dt   \Big)^{1/2}      \notag\\
&\leq C   \|\theta'_0\|_2^2 \|\varphi\|_{L^2(0,T;\dot H^1(0,2\pi))}.
\end{align*}
It follows that $\frac{\partial(\bar{\theta'  w})}{\partial z}  \in L^2(0,T; H^{-1}(0,2\pi))$ and $\|\frac{\partial(\bar{\theta'  w})}{\partial z}\|_{L^2(0,T; H^{-1}(0,2\pi))} \leq  C   \|\theta'_0\|_2^2$.

Now we take a test function $\xi=\sum_{0<|j| \leq N}  \hat{\xi}(j,t) e^{ij z}$ where Fourier coefficients $\hat{\xi}(t,j)$ are continuous in $t$. By using the fact that $w_m \rightarrow w$ strongly in $L^2(\Omega \times (0,T))$ and that $\theta_m' \rightarrow \theta'$ weakly in $L^2(\Omega \times (0,T))$, we derive
\begin{align}   \label{wk-26}
&\lim_{m\rightarrow \infty}\int_0^T \int_0^{2\pi} \frac{\partial (\bar{\theta_m'  w_m})}{\partial z} \xi dz dt
=  - \lim_{m\rightarrow \infty}\int_0^T \int_0^{2\pi} (\bar{\theta_m'  w_m}) \xi_z dz dt
\notag\\
&= -\lim_{m\rightarrow \infty} \frac{1}{4\pi^2 L^2}\int_0^T \int_{\Omega} \theta_m' (w_m -w) \xi_z dx dy dz dt
- \lim_{m\rightarrow \infty} \frac{1}{4\pi^2 L^2}\int_0^T \int_{\Omega} \theta_m' w  \xi_z dx dy dz dt \notag\\
&=- \frac{1}{4\pi^2 L^2}\int_0^T \int_{\Omega} \theta' w  \xi_z dx dy dz dt  = - \int_0^T \int_0^{2\pi} (\bar{\theta' w}) \xi_z dz dt  \notag\\
&= \int_0^T  \langle \frac{\partial (\bar{\theta' w})}{\partial z}, \xi \rangle_{{H^{-1}(0,2\pi)} \times \dot H^1(0,2\pi)} dt.
\end{align}
Also, since $\partial_z \bar{\theta_m} \rightarrow \bar{\theta}_z$ weakly in $L^2((0,2\pi) \times (0,T))$, we obtain
\begin{align}   \label{wk-27}
&\lim_{m\rightarrow \infty}\int_0^T \int_0^{2\pi} \frac{\partial^2 \bar{\theta_m}}{\partial z^2} \xi dz dt
=  -    \lim_{m\rightarrow \infty}\int_0^T \int_0^{2\pi} \frac{\partial \bar{\theta_m}}{\partial z} \xi_z dz dt    \notag\\
&=  -\int_0^T \int_0^{2\pi} \frac{\partial \bar{\theta}}{\partial z} \xi_z dz dt
= \int_0^T  \langle   \frac{\partial^2 \bar{\theta}}{\partial z^2}, \xi \rangle_{{H^{-1}(0,2\pi)} \times \dot H^1(0,2\pi)} dt.\end{align}

Because $\frac{\partial(\bar{\theta'_m w_m})}{\partial z} = \frac{\partial^2 \bar{\theta_m}}{\partial z^2}$ and due to (\ref{wk-26})-(\ref{wk-27}), we obtain
\begin{align} \label{wk-29}
 \int_0^T  \langle \frac{\partial (\bar{\theta' w})}{\partial z}  -    \frac{\partial^2 \bar{\theta}}{\partial z^2}    , \xi \rangle_{{H^{-1}(0,2\pi)} \times \dot H^1(0,2\pi)} dt =0.
 \end{align}
Then, since $\frac{\partial (\bar{\theta' w})}{\partial z}$ and $\frac{\partial^2 \bar{\theta}}{\partial z^2}$
both belong to $L^2(0,T; H^{-1}(0,2\pi))$, we conclude that
\begin{align}   \label{wk-30}
\frac{\partial (\bar{\theta' w})}{\partial z}  =  \frac{\partial^2 \bar{\theta}}{\partial z^2},   \;\;  \text{in}    \;  L^2(0,T; H^{-1}(0,2\pi))
\end{align}

In sum, because of (\ref{plin1})-(\ref{plin2}), (\ref{heat}) and (\ref{wk-30}), we have obtained a weak solution
for system (\ref{inP-1})-(\ref{inP-5}) on $[0,T]$, in the sense of Definition \ref{def-weak}.
Then, thanks to the energy inequality (\ref{wk-00}), the solution can be extended to a global weak solution on $[0,\infty)$.
This completes the proof for the global existence of weak solutions for system (\ref{inP-1})-(\ref{inP-5}).

\subsubsection{Energy identity}
Let $T>0$. Since $$\frac{\partial \theta'}{\partial t} \in L^2(0,T;(H^1_h(\Omega))')  \;\; \text{and} \;\; \theta' \in L^2(0,T;H^1_h(\Omega)),$$
then according to Lemma 2.1 on page 176 of Temam's book \cite{Temam}, we obtain $\theta'\in C([0,T];L^2(\Omega))$ and
\begin{align}  \label{dual}
\frac{d}{dt}\|\theta'\|_2^2 =   2 \langle  \frac{\partial \theta'}{\partial t}, \theta'  \rangle_{(H^1_h(\Omega))' \times H^1_h(\Omega)}.
\end{align}
Therefore, we can take scalar product of (\ref{heat}) with $\theta'$ to obtain
\begin{align}   \label{wk-28}
\frac{1}{2} \frac{d}{dt} \|\theta'(t)\|_2^2   +  \|\nabla_h \theta' (t)\|_2^2
+  4\pi^2 L^2     \int_0^{2\pi}  |\partial_z \bar{\theta}(z,t)|^2 dz  =  0,  \;\;  \text{for all} \; t\in [0,T].
\end{align}
Integrating (\ref{wk-28}) over $[0,t]$ yields the energy identity (\ref{EI}).

\subsubsection{Decay of the solution} Since $\bar{\theta'}=0$, then the Poincar\'e inequality (\ref{Poin}) shows that
$  \frac{1}{L^2} \|\theta' (t)\|_2^2 \leq \|\nabla_h \theta' (t)\|_2^2$. Therefore, it follows from (\ref{wk-28}) that
\begin{align*}
\frac{1}{2} \frac{d}{dt} \|\theta'(t)\|_2^2   +  \frac{1}{L^2} \|\theta' (t)\|_2^2  \leq 0,  \;\; \text{for all} \; t\geq 0.
\end{align*}
Consequently,
\begin{align}  \label{deca-1}
\|\theta'(t)\|_2^2 \leq e^{-\frac{2}{L^2} t} \|\theta'_0\|_2^2,     \;\; \text{for all} \; t\geq 0.
\end{align}
Then, due to (\ref{linn-1}) and (\ref{deca-1}), we obtain
\begin{align}   \label{deca-2}
\|\Delta_h \mathbf u(t)\|_2^2 + \|\Delta_h w(t)\|_2^2 \leq C e^{-\frac{2}{L^2} t} \|\theta'_0\|_2^2,   \;\;  \text{for all} \; t\geq 0.
\end{align}
Due to the Poincar\'e inequality (\ref{Poin}), we have also $\|\mathbf u(t)\|_2^2 + \|w(t)\|_2^2 \leq C e^{-\frac{2}{L^2} t} \|\theta'_0\|_2^2$.

Next, taking the scalar product of equation (\ref{wk-30}) and $\bar{\theta}$ in the duality between $H^{-1}(0,2\pi)$
and $\dot H^1(0,2\pi)$ yields
\begin{align}  \label{deca-3}
\int_0^{2\pi} |\bar{\theta}_z|^2 dz  = \int_0^{2\pi} (\bar{\theta' w})\bar{\theta}_z dz
\leq  \frac{1}{2}  \int_0^{2\pi} |\bar{\theta}_z|^2 dz + \frac{1}{2} \int_0^{2\pi} \left|\bar{\theta' w} \right|^2 dz,
\end{align}
where we use the Cauchy-Schwarz inequality and the Young's inequality.

By (\ref{deca-3}), we have
\begin{align*}
&\int_0^{2\pi} |\bar{\theta}_z|^2 dz
\leq   \int_0^{2\pi} \left|\bar{\theta' w} \right|^2 dz
\leq C\int_0^{2\pi} \Big(\int_{[0,2\pi L]^2} |\theta'|^2 dx dy\Big)   \Big(\int_{[0,2\pi L]^2} |w|^2 dx dy\Big)  dz   \notag\\
&\leq C \sup_{z\in [0,2\pi]} \Big(\int_{[0,2\pi L]^2} |w|^2 dx dy\Big)    \|\theta'\|_2^2
\leq C  (\|w\|_2^2 + \|\partial_z^{\frac{2}{3}} w\|_2^2)   \|\theta'\|_2^2  \leq C \|\theta'\|_2^4
\leq C  e^{-\frac{4}{L^2} t} \|\theta'_0\|_2^4,
\end{align*}
where we have used Lemma \ref{lemma2} and (\ref{linn-2}) as well as the decay estimate (\ref{deca-1}).
Since $\int_0^{2\pi} \bar{\theta} dz =0$, then we can use the Poincar\'e inequality to conclude
\begin{align*}
\int_0^{2\pi} |\bar{\theta}|^2 dz \leq C\int_0^{2\pi} |\bar{\theta}_z|^2 dz \leq C  e^{-\frac{4}{L^2} t} \|\theta'_0\|_2^4.
\end{align*}

\vspace{0.1 in}

\subsection{Uniqueness of weak solutions and continuous dependence on initial data}
This section is devoted to proving the uniqueness of weak solutions. Assume there are two weak solutions
$(\theta'_1, \bar{\theta_1}, \mathbf u_1,w_1)$ and  $(\theta'_2, \bar{\theta_2}, \mathbf u_2,w_2)$ on $[0,T]$,
in the sense of Definition \ref{def-weak}. Set
 $\theta'=\theta_1'-\theta_2'$, $\bar{\theta}=\bar{\theta_1}  -  \bar{\theta_2}$, $\theta=\theta_1 - \theta_2$, $\mathbf u=\mathbf u_1-\mathbf u_2$, $w=w_1-w_2$, $\psi=\psi_1 - \psi_2$ and $\omega = \omega_1  -  \omega_2$. Therefore,
\begin{align}
& \frac{\partial \psi}{\partial z} = \text{Ra}\,\theta' + \Delta_h w, \;\; \text{in}
\;\; L^2(0,T; H^1_h(\Omega)) \cap C([0,T];L^2(\Omega)), \label{inu-1} \\
& - \frac{\partial w}{\partial z} = \Delta_h \omega,          \;\; \text{in}  \;\; L^2(\Omega \times (0,T)),        \label{inu-2}\\
&\frac{\partial \theta'}{\partial t}  + \mathbf u \cdot \nabla_h \theta'_1 +  \mathbf u_2 \cdot \nabla_h \theta' + w\frac{\partial \bar{\theta_1}}{\partial z} +
w_2 \frac{\partial \bar{\theta}}{\partial z}= \Delta_h \theta',       \;\; \text{in}  \;\; L^2(0,T;(H^1_h(\Omega))'),           \label{inu-3}\\
&\frac{\partial(\bar{\theta' w_2})}{\partial z}  +     \frac{\partial(\bar{\theta'_1 w})}{\partial z} = \frac{\partial^2 \bar \theta}{\partial z^2}, \;\; \text{in}  \;\; L^2(0,T;H^{-1}(0,2\pi)),  \label{inu-4}
\end{align}
and $\omega = \nabla_h \times \mathbf u$, $\psi = \Delta_h^{-1} \omega$.

Due to the linear equations (\ref{inu-1})-(\ref{inu-2}) and using the same calculations as (\ref{wk-1})-(\ref{wk-4'}), we have
\begin{align}
&\|\Delta_h \mathbf u\|_2^2  +    \|\Delta_h w\|^2_2 \leq 2\text{Ra}^2\|\theta'\|_2^2    \;\; \text{and}  \;\;
\|\mathbf u_z\|_2^2  +    \|w_z\|^2_2  \leq  2\text{Ra}^2 \|\nabla_h \theta'\|_2^2,    \label{lin-1}\\
&  \|\partial_z^{\frac{2}{3}}\mathbf u\|_2^2  +    \|\partial_z^{\frac{2}{3}}w\|^2_2  \leq  2\text{Ra}^2 \|\theta'\|_2^2.\label{lin-2}
\end{align}

Thanks to (\ref{dual}), we can take the scalar product of equation (\ref{inu-3}) and $\theta'$ in the duality between $(H^1_h(\Omega))'$ and $H^1_h(\Omega)$ to obtain
\begin{align}  \label{inu-5}
&\frac{1}{2}\frac{d}{dt} \|\theta'(t)\|_2^2 + \|\nabla_h \theta'(t)\|_2^2
 \notag\\
&=-\int_{\Omega} (\mathbf u \cdot  \nabla_h \theta'_1)  \theta'  dx dy dz  - \int_{\Omega}   w\frac{\partial \bar{\theta_1}}{\partial z}  \theta' dx dy dz  - \int_{\Omega}    w_2 \frac{\partial \bar{\theta}}{\partial z}  \theta' dx dy dz,
\end{align}
 where we have used the fact that $\int_{\Omega} \left(\mathbf u_2 \cdot \nabla_h \theta' \right) \theta' dx dy dz =0$ since $\nabla_h \cdot \mathbf u_2=0$.

Next we estimate each term on the right-hand side of (\ref{inu-5}).
By employing identity (\ref{iden-0}) and Lemma \ref{lemma1} as well as estimate (\ref{lin-1}), we deduce
\begin{align}      \label{inu-6}
 &-\int_{\Omega} (\mathbf u \cdot  \nabla_h \theta'_1)  \theta'  dx dy dz
 = \int_{\Omega} (\mathbf u \cdot  \nabla_h \theta')  \theta'_1  dx dy dz \notag\\
 &\leq C\|\nabla_h \mathbf u\|_2^{1/2} \left( \|\mathbf u\|_2  + \|\mathbf u_z\|_2 \right)^{1/2}     \|\nabla_h \theta'\|_2
 \|\theta_1'\|_2^{1/2} \|\nabla_h \theta_1'\|_2^{1/2} \notag\\
  &\leq C  \|\theta'\|_2^{1/2}  \|\nabla_h \theta'\|_2^{3/2}     \|\theta_1'\|_2^{1/2} \|\nabla_h \theta_1'\|_2^{1/2}  \notag\\
 &\leq  \frac{1}{2}   \|\nabla_h \theta'\|_2^2 + C \|\theta'\|_2^2     \|\theta_1'\|_2^2 \|\nabla_h \theta_1'\|_2^2,
  \end{align}
where we use H\"older's inequality.

To treat the second integral on the right-hand of (\ref{inu-5}), we apply the Cauchy-Schwarz inequality to deduce
 \begin{align}   \label{inu-7}
  &\int_{\Omega}  \big|w\frac{\partial \bar{\theta_1}}{\partial z}  \theta' \big| dx dy dz
   =  \int_0^{2\pi}   \Big(\int_{[0,2\pi L]^2} |w \theta'| dx dy\Big) | \partial_z {\bar{\theta_1}} |     dz \notag\\
  &\leq \int_0^{2\pi} \Big(\int_{[0,2\pi L]^2} |w(x,y,z)|^2 dx dy\Big)^{1/2}    \Big(\int_{[0,2\pi L]^2} |\theta'(x,y,z)|^2 dx dy\Big)^{1/2}
   |\partial_z {\bar{\theta_1}(z)}| dz \notag\\
  &\leq \Big[\sup_{z\in [0,2\pi]}  \Big(\int_{[0,2\pi L]^2} |w(x,y,z)|^2 dx dy\Big)^{1/2} \Big]  \|\theta'\|_2 \Big(\int_0^{2\pi}
  |\partial_z {\bar{\theta_1}(z)}|^2 dz\Big)^{1/2} \notag\\
  &\leq  C(\|w\|_2+ \|\partial_z^{\frac{2}{3}}w\|_2)  \|\theta'\|_2 \Big(\int_0^{2\pi}    |\partial_z {\bar{\theta_1}}|^2 dz\Big)^{1/2}  \notag\\
    &\leq   C \|\theta'\|_2^2   \Big(\int_0^{2\pi}    |\partial_z {\bar{\theta_1}}|^2 dz\Big)^{1/2},
  \end{align}
where we have used Lemma \ref{lemma2} and estimates (\ref{lin-1})-(\ref{lin-2}).

Next, we deal with the third integral on the right-hand side of (\ref{inu-5}). Indeed, from (\ref{inu-4}),
we have $\frac{\partial(\bar{\theta' w_2})}{\partial z}=\frac{\partial^2 \bar \theta}{\partial z^2}  -  \frac{\partial(\bar{\theta'_1 w})}{\partial z}$ in $L^2(0,T;H^{-1}(0,2\pi))$, and thus
 \begin{align}   \label{inu-8}
 &- \frac{1}{4 \pi^2 L^2}\int_{\Omega}    w_2 \frac{\partial \bar{\theta}}{\partial z}  \theta' dx dy dz
 =-\int_0^{2\pi}  (\bar{w_2   \theta'})    \frac{\partial \bar{\theta}}{\partial z}  dz    \notag\\
 & =   \langle \frac{\partial ({\bar{w_2     \theta'} }) }{\partial z}, \bar{\theta}  \rangle_{H^{-1}(0,2\pi) \times \dot H^1(0,2\pi)}
 =  \langle \frac{\partial^2 \bar \theta}{\partial z^2}  -  \frac{\partial(\bar{\theta'_1 w})}{\partial z},   \bar{\theta} \rangle_{H^{-1}(0,2\pi) \times \dot H^1(0,2\pi)}   \notag\\
 &= -\int_0^{2\pi} \left|\bar{\theta}_z\right|^2 dz  + \int_0^{2\pi}    (\bar{\theta'_1 w})  \bar{\theta}_z dz   \notag\\
 &\leq   -\int_0^{2\pi} \left|\bar{\theta}_z\right|^2 dz  + \frac{1}{2}\int_0^{2\pi}    \left|\bar{\theta'_1 w}\right|^2 dz  + \frac{1}{2} \int_0^{2\pi}    \left|\bar{\theta}_z\right|^2  dz  \notag\\
  &\leq -\frac{1}{2}   \int_0^{2\pi} |\bar{\theta}_z|^2 dz  +  \frac{1}{2}\int_0^{2\pi}   \left|\bar{\theta'_1 w}\right|^2 dz.
  \end{align}

Using the Cauchy-Schwarz inequality and Lemma \ref{lemma2} as well as estimate (\ref{lin-2}), we have
 \begin{align}    \label{inu-9}
  &\int_0^{2\pi}    \left|\bar{\theta'_1 w}\right|^2 dz
  \leq C\int_0^{2\pi} \Big(\int_{[0,2\pi L]^2}  |\theta'_1|^2 dx dy\Big)    \Big(\int_{[0,2\pi L]^2}  |w|^2 dx dy\Big) dz \notag\\
  &\leq C\Big[\sup_{z\in [0,2\pi]}   \Big(\int_{[0,2\pi L]^2}  |w|^2 dx dy\Big) \Big]   \|\theta'_1\|_2^2
  \leq   C \Big(\|w\|_2^2+\|\partial_z^{\frac{2}{3}}w\|_2^2 \Big)   \|\theta'_1\|_2^2  \notag\\
  &\leq C \|\theta'\|_2^2  \|\theta'_1\|_2^2.
  \end{align}

 Then, combining (\ref{inu-8}) and (\ref{inu-9}) gives
 \begin{align}  \label{inu-10}
 -   \int_{\Omega}    w_2 \frac{\partial \bar{\theta}}{\partial z}  \theta' dx dy dz
 \leq     -  2\pi^2 L^2  \int_0^{2\pi} \left|\bar{\theta}_z\right|^2 dz + C  \|\theta'\|_2^2 \|\theta'_1\|_2^2.
    \end{align}

Using (\ref{inu-6}), (\ref{inu-7}) and (\ref{inu-10}), we infer from (\ref{inu-5}) that
\begin{align}   \label{inu-11}
&\frac{d}{dt} \|\theta'\|_2^2 + \|\nabla_h \theta'\|_2^2 +   4 \pi^2 L^2 \int_0^{2\pi} |\bar{\theta}_z|^2 dz    \notag\\
&\leq  C  \|\theta'\|_2^2   \Big(\|\theta_1'\|_2^2 \|\nabla_h \theta_1'\|_2^2  +  \int_0^{2\pi}    |\partial_z {\bar{\theta_1}}|^2 dz + \|\theta'_1\|_2^2+1 \Big),
 \end{align}
for all $t\in [0,T]$.

By virtue of the energy identity (\ref{EI}), we know that
\begin{align}   \label{inu-12}
\frac{1}{2}\|\theta'_1(t)\|_2^2 + \int_0^t   \|\nabla_h \theta'_1(s) \|_2^2 ds
+ 4\pi^2 L^2 \int_0^t   \int_0^{2\pi}    |\partial_z {\bar{\theta_1}(z,s)}|^2 dz ds = \frac{1}{2}\|\theta'_1(0)\|_2^2,
\end{align}
for all $t\in [0,T]$.  Thanks to Gr\"onwall's inequality and estimate (\ref{inu-12}), we derive from (\ref{inu-11}) that
 \begin{align}  \label{Gron}
 \|\theta'(t)\|_2^2     &\leq  \|\theta'(0)\|_2^2 \,  \exp\Big[{C \int_0^t  \Big(\|\theta_1'(s)\|_2^2 \|\nabla_h \theta_1'(s)\|_2^2  +  \int_0^{2\pi}    |\partial_z {\bar{\theta_1}(z,s)}|^2 dz + \|\theta'_1(s)\|_2^2+1\Big) ds  }\Big] \notag\\
&\leq   \|\theta'(0)\|_2^2 \,  \exp\left[ C (\|\theta_1'(0)\|_2^4 +  \|\theta_1'(0)\|_2^2   +  t) \right],
  \end{align}
for all $t\in [0,T]$. As a result, if $\theta_1'(0)=\theta_2'(0)$, i.e., $\theta'(0)=\theta_1'(0)- \theta_2'(0)=0$, then (\ref{Gron}) implies that $\|\theta'(t)\|_2^2 =0$ for all $t\in [0,T]$, namely, $\theta_1'(t)=\theta_2'(t)$ for all $t\in [0,T]$.

In addition, taking advantage of estimate (\ref{lin-1}) and the Poincar\'e inequality (\ref{Poin}), we know that $\|\mathbf u(t)\|_2^2 + \|w(t)\|_2^2 \leq C \|\theta'(t)\|_2^2=0$ for all $t\in [0,T]$, thus $\mathbf u_1(t)=\mathbf u_2(t)$ and $w_1(t)=w_2(t)$ for all $t\in [0,T]$.

Furthermore, we take the scalar product of equation (\ref{inu-4}) and $-\bar{\theta}$ in the duality between $H^{-1}(0,2\pi)$ and $\dot H^1(0,2\pi)$, then
\begin{align*}
\int_0^{2\pi} |\bar{\theta}_z|^2 dz   =    \int_0^{2\pi}  \left[(\bar{\theta' w_2})   +   (\bar{\theta'_1 w})\right] \bar{\theta}_z dz =0,
\;\;  \text{for all}  \;  t\in [0,T],
\end{align*}
since $\theta'(t)=w(t)=0$ for all $t\in [0,T]$. Thus, $\bar{\theta}_z(t)=0$ for all $t\in [0,T]$. That is,
\begin{align}  \label{epz}
\partial_z \bar{\theta_1}(t) =   \partial_z \bar{\theta_2}(t), \;\; \text{for all}\,\, t\in [0,T].
 \end{align}
From (\ref{epz}), we know that $\bar{\theta_1}(z,t) = \bar{\theta_2}(z,t) + C(t)$ for all $z\in [0,2\pi]$, and for all $t\in [0,T]$.
Then, since $\bar{\theta_1}$ and $\bar{\theta_2}$ both have average zero over $[0,2\pi]$, i.e.,
$\int_0^{2\pi} \bar{\theta_1} dz =   \int_0^{2\pi} \bar{\theta_2} dz =0$, it follows that $C(t)=0$ for all $t\in [0,T]$.
Consequently, $\bar{\theta_1}(z,t) = \bar{\theta_2}(z,t)$ for all $z\in [0,2\pi]$, and for all $t\in [0,T]$.

Since both $\theta_1'=\theta_2'$ and $\bar{\theta_1}=\bar{\theta_2}$ are valid,
then $\theta_1= \theta_1'+ \bar{\theta_1}= \theta_2'+ \bar{\theta_2}= \theta_2$ for all $t\in [0,T]$.

This completes the proof for the uniqueness of weak solutions.

Furthermore, by using (\ref{Gron}), it is easy to obtain the continuous dependence on initial data stated in Theorem \ref{thm-weak}.

 \vspace{0.1 in}

\section{Strong solutions}     \label{sec-strong}
In this section, we prove the global well-posedness of strong solutions for system (\ref{inP-1})-(\ref{inP-5}).
The uniqueness of strong solutions follows from the uniqueness of weak solutions. It remains to show the existence of global strong solutions, when the initial value $\theta_0' \in H^1(\Omega)$.

\subsection{Existence of strong solutions}

We use the method of Galerkin approximation. Let us consider the Galerkin system (\ref{inG-1})-(\ref{inG-4}) and perform energy estimate as follows.

\subsubsection{Estimate from linear equations (\ref{inG-1})-(\ref{inG-2})}
Recall the following estimate from subsection \ref{sec-energy}.
\begin{align}
&\|\Delta_h \mathbf u_m\|_2^2  \leq  \text{Ra}^2  \|\theta'_m\|_2^2,    \;\;\;\;   \|\Delta_h w_m\|_2^2\leq  \text{Ra}^2 \|\theta'_m\|_2^2;    \label{sin-1}  \\
&\|\partial_z \mathbf u_m\|_2^2  \leq   \text{Ra}^2 \|\nabla_h \theta'_m\|_2^2,    \;\;\;\;  \|\partial_z w_m\|_2^2 \leq   \text{Ra}^2 \|\nabla_h \theta'_m\|_2^2;    \label{sin-2}   \\
&\|\partial_z^{\frac{2}{3}} \mathbf u_m\|_2^2  \leq   \text{Ra}^2 \|\theta'_m\|_2^2,
\;\;\;\;  \|\partial_z^{\frac{2}{3}} w_m\|_2^2 \leq    \text{Ra}^2 \|\theta'_m\|_2^2.    \label{sin-5}
\end{align}
Corresponding to (\ref{sin-2}), we can also derive
\begin{align}
&\|\nabla_h \partial_z \mathbf u_m\|_2^2   \leq    \text{Ra}^2 \|\Delta_h \theta'_m\|_2^2, \;\;\;\;
\|\nabla_h \partial_z w_m\|_2^2 \leq  \text{Ra}^2    \|\Delta_h \theta'_m\|_2^2;      \label{sin-3}       \\
& \|  \partial_{zz} \mathbf u_m\|_2^2    \leq       \text{Ra}^2 \|\nabla_h \partial_z \theta'_m\|_2^2,
 \;\;\;\;      \|\partial_{zz} w_m\|_2^2 \leq   \text{Ra}^2 \|\nabla_h   \partial_z \theta'_m\|_2^2. \label{sin-4}
\end{align}

\vspace{0.1 in}

\subsubsection{Estimate for $\|\theta'_m\|_2^2$}
This is exactly the same as the energy estimate performed in subsection \ref{sec-energy}. Therefore,  by (\ref{wk-0}), we have
\begin{align}  \label{strong-0}
\frac{1}{2}  \|\theta'_m(t)\|_2^2   + \int_0^t     \|\nabla_h \theta'_m \|_2^2 ds +
4\pi^2 L^2   \int_0^t    \int_0^{2\pi}  |\partial_z \bar{\theta_m}|^2 dz ds  = \frac{1}{2}   \|\theta'_m(0)\|_2^2  \leq   \frac{1}{2}   \|\theta'_0\|_2^2,
\end{align}
for all $t\geq 0$.

\subsubsection{Estimate for $\|\nabla_h \theta'_m\|_2^2$} Taking the $L^2$ inner product of (\ref{inG-3}) with $-\Delta_h \theta'_m$ yields
\begin{align}  \label{in-3}
&\frac{1}{2}\frac{d}{dt} \|\nabla_h \theta'_m\|_2^2 + \|\Delta_h \theta'_m\|_2^2 \notag\\
&= \int_{\Omega} [P_m (\mathbf u_m \cdot \nabla_h \theta'_m )]  \Delta_h \theta'_m dx dy dz
+\int_{\Omega}  [P_m(w_m \frac{\partial \bar{\theta_m}}{\partial z})] \Delta_h \theta'_m  dx dy dz .
\end{align}
Thanks to Lemma \ref{lemma1}, we have
\begin{align}  \label{in-4}
 &\int_{\Omega} [P_m (\mathbf u_m \cdot \nabla_h \theta'_m )]  \Delta_h \theta'_m dx dy dz
 =   \int_{\Omega} (\mathbf u_m \cdot \nabla_h \theta'_m )  \Delta_h \theta'_m dx dy dz   \notag\\
 &\leq C \|\nabla_h \mathbf u_m\|_2^{1/2} \left(\|\mathbf u_m\|_2 + \|\partial_z \mathbf u_m\|_2 \right)^{1/2} \|\nabla_h \theta'_m\|_2^{1/2}  \|\Delta_h \theta'_m\|_2^{3/2}   \notag\\
 &\leq  \frac{1}{4}   \|\Delta_h \theta'_m\|_2^2  +      C\|\nabla_h \mathbf u_m\|_2^2 \left(\|\mathbf u_m\|_2^2 + \|\partial_z \mathbf u_m\|_2^2 \right) \|\nabla_h \theta'_m\|_2^2   \notag\\
 &\leq    \frac{1}{4}   \|\Delta_h \theta'_m\|_2^2 +       C \|\theta'_m\|_2^2  \|\nabla_h \theta'_m\|_2^4  ,
    \end{align}
where we have used estimates (\ref{sin-1})-(\ref{sin-2}).

Next, using the Cauchy-Schwarz inequality, we estimate
\begin{align}   \label{in-5}
&  \int_{\Omega}  [P_m(w_m \frac{\partial \bar{\theta_m}}{\partial z})] \Delta_h \theta'_m  dx dy dz
= \int_0^{2\pi}  \Big(\int_{[0,2\pi L]^2} w_m \Delta_h \theta'_m  dx dy \Big)
\frac{\partial \bar{\theta_m}}{\partial z}   dz  \notag\\
& \leq  \int_0^{2\pi}     \Big( \int_{[0,2\pi L]^2} |w_m|^2 dx dy  \Big)^{1/2}     \Big( \int_{[0,2\pi L]^2} |\Delta_h \theta'_m|^2 dx dy  \Big)^{1/2}  \Big|\frac{\partial \bar{\theta_m}}{\partial z}\Big| dz  \notag\\
&\leq   \Big[\sup_{z\in [0,2\pi]} \Big( \int_{[0,2\pi L]^2} |w_m|^2 dx dy  \Big)^{1/2}  \Big] \|\Delta_h \theta'_m\|_2  \Big(\int_0^{2\pi} \Big|\frac{\partial \bar{\theta_m}}{\partial z}\Big|^2 dz\Big)^{1/2},   \notag\\
&\leq C\Big(\|w_m\|_2+ \|\partial_z^{\frac{2}{3}} w_m\|_2 \Big)
\|\Delta_h \theta'_m\|_2  \Big(\int_0^{2\pi} |\partial_z \bar{\theta_m}|^2 dz \Big)^{1/2} \notag\\
&\leq \frac{1}{4}    \|\Delta_h \theta'_m\|_2^2 + C \Big(\|w_m\|_2^2+ \|\partial_z^{\frac{2}{3}} w_m\|_2^2 \Big)   \Big(\int_0^{2\pi} |\partial_z \bar{\theta_m}|^2 dz\Big)  \notag\\
&\leq \frac{1}{4}    \|\Delta_h \theta'_m\|_2^2 + C   \|\theta'_m\|_2^2   \Big(\int_0^{2\pi} |\partial_z \bar{\theta_m}|^2 dz\Big),
\end{align}
where we have used Lemma \ref{lemma2} and estimates (\ref{sin-1}) and (\ref{sin-5}).

By applying estimates (\ref{in-4}) and (\ref{in-5}) into (\ref{in-3}), we infer
\begin{align}   \label{in-6}
\frac{d}{dt} \|\nabla_h \theta'_m\|_2^2 + \|\Delta_h \theta'_m\|_2^2
& \leq    C\|\theta'_m\|_2^2  \|\nabla_h \theta'_m\|_2^4
+   C   \|\theta'_m\|_2^2
 \Big(\int_0^{2\pi} |\partial_z \bar{\theta_m}|^2 dz\Big).
\end{align}
Applying Gr\"onwall's inequality to (\ref{in-6}) and using estimate (\ref{strong-0}), we obtain
\begin{align}   \label{in-7}
\|\nabla_h \theta'_m (t)\|_2^2
&\leq e^{ C \|\theta'_0\|_2^2   \int_0^t \|\nabla_h \theta'_m\|_2^2 ds  } \Big(   \|\nabla_h \theta'_0\|_2^2    +  C\|\theta'_0\|_2^2  \int_0^t  \int_0^{2\pi} |\partial_z \bar{\theta_m}|^2 dz ds      \Big)   \notag\\
&\leq  e^{C  \|\theta'_0\|_2^4}  (\|\nabla_h \theta'_0\|_2^2    +  C\|\theta'_0\|_2^4 ),   \;\; \text{for all} \;  t\geq 0.
\end{align}
Also, by integrating (\ref{in-6}) over $[0,t]$, and using (\ref{strong-0}) and (\ref{in-7}), one has
\begin{align}  \label{in-77}
\int_0^t    \|\Delta_h \theta'_m(s)\|_2^2 ds   \leq C(\|\nabla_h \theta'_0\|_2^2),   \;\; \text{for all} \;  t\geq 0,
\end{align}
where $C(\|\nabla_h \theta'_0\|_2^2)$ is a constant depending on $\|\nabla_h \theta'_0\|_2^2$ but independent of time.

\subsubsection{Estimate for $\|\partial_z \theta'_m\|_2^2$}
Differentiating (\ref{inG-3}) with respect to $z$ gives
\begin{align}   \label{in-8b}
&\partial_t \partial_z \theta'_m  + P_m(\partial_z \mathbf u_m \cdot \nabla_h \theta'_m) + P_m(\mathbf u_m \cdot \nabla_h \partial_z \theta'_m)   \notag\\
&\hspace{0.2 in} +P_m(\partial_z w_m \partial_z \bar{\theta_m})  + P_m(w_m \partial_{zz} \bar{\theta_m})= \Delta_h \partial_z \theta'_m.
\end{align}

Taking the $L^2$ inner product of (\ref{in-8b}) with $\partial_{z}\theta'_m$, we obtain
\begin{align}   \label{in-8}
&\frac{1}{2} \frac{d}{dt} \|\partial_z \theta'_m\|_2^2 + \|\nabla_h \partial_z \theta'_m\|_2^2 =  -  \int_{\Omega} \left(\partial_z \mathbf u_m \cdot \nabla_h \theta'_m\right)\partial_z \theta'_m dx dy dz    \notag\\
&\hspace{0.2 in} -   \int_{\Omega} \partial_z w_m (\partial_z \bar{\theta_m})  \partial_z \theta'_m dx dy dz
 - \int_{\Omega} w_m (\partial_{zz}\bar{\theta_m})  \partial_z \theta'_m dx dy dz,
\end{align}
where we have used $\int_{\Omega} \left(\mathbf u_m \cdot \nabla_h \partial_z \theta'_m\right)\partial_z \theta'_m dx dy dz=0$ since $\nabla_h \cdot \mathbf u_m=0$.

We evaluate every integral on the right-hand side of (\ref{in-8}). First, by using Lemma \ref{lemma1} as well as estimate (\ref{sin-3})-(\ref{sin-4}), we have
\begin{align}  \label{in-9}
 &\int_{\Omega} |\left(\partial_z \mathbf u_m \cdot \nabla_h \theta'_m\right)\partial_z \theta'_m| dx dy dz \notag\\
 &  \leq C \|\nabla_h \partial_z \mathbf u_m\|_2^{1/2} \|\partial_{zz}\mathbf u_m\|_2^{1/2}
   \|\nabla_h \theta'_m\|_2^{1/2}    \|\Delta_h \theta'_m\|_2^{1/2}    \|\partial_z \theta'_m\|_2 \notag\\
   &\leq C \|\Delta_h \theta'_m\|_2 \|\nabla_h \partial_z \theta'_m\|_2^{1/2}      \|\nabla_h \theta'_m\|_2^{1/2}    \|\partial_z \theta'_m\|_2 \notag\\
   & \leq \frac{1}{4}   \|\nabla_h \partial_z \theta'_m\|_2^2 +     \|\nabla_h \theta'_m\|_2^2
   +    C   \|\Delta_h \theta'_m\|_2^2   \|\partial_z \theta'_m\|_2^2.
 \end{align}

Applying the Cauchy-Schwarz inequality yields
\begin{align}    \label{in-11}
&\int_{\Omega} |\partial_z w_m (\partial_z \bar{\theta_m}) \partial_z  \theta'_m| dx dy dz   \notag\\
&\leq \int_0^{2\pi} \Big(\int_{[0,2\pi L]^2} |\partial_z w_m|^2 dx dy\Big)^{1/2}    \Big(\int_{[0,2\pi L]^2} |\partial_z\theta'_m|^2 dx dy\Big)^{1/2} |\partial_z \bar{\theta_m}| dz  \notag\\
&\leq  \Big[\sup_{z\in [0,2\pi]}   \Big(\int_{[0,2\pi L]^2} |\partial_z w_m|^2 dx dy\Big)^{1/2} \Big]  \|\partial_z \theta'_m\|_2  \Big(\int_0^{2\pi} |\partial_z \bar{\theta_m}|^2 dz\Big)^{1/2}, \notag\\
&\leq C  \|\partial_{zz}w_m\|_2
\|\partial_z \theta'_m\|_2  \Big(\int_0^{2\pi} |\partial_z \bar{\theta_m}|^2 dz \Big)^{1/2}  \notag\\
&\leq C \|\nabla_h \partial_z \theta'_m\|_2    \|\partial_z \theta'_m\|_2  \Big(\int_0^{2\pi} |\partial_z \bar{\theta_m}|^2 dz\Big)^{1/2}  \notag\\
&\leq \frac{1}{4}    \|\nabla_h \partial_z \theta'_m\|_2^2  + C  \|\partial_z \theta'_m\|_2^2  \Big(\int_0^{2\pi} |\partial_z \bar{\theta_m}|^2 dz\Big),
\end{align}
where we have used Lemma \ref{lemma2} and estimate (\ref{sin-4}).

Now we estimate the third integral on the right-hand side of (\ref{in-8}). Since $\partial_z (\bar{\theta'_m w_m})= \partial_{zz}\bar{\theta_m}$ due to (\ref{inG-4}), we have
\begin{align}   \label{in-12}
&- \frac{1}{4\pi^2 L^2} \int_{\Omega} w_m (\partial_{zz}\bar{\theta_m})  \partial_z \theta'_m dx dy dz
  = - \frac{1}{4 \pi^2 L^2}  \int_{\Omega} w_m [\partial_z (\bar{\theta'_m w_m})] \partial_z \theta'_m dx dy dz   \notag\\
&= - \int_0^{2\pi}    (\bar{w_m \partial_z\theta'_m})   [\partial_z (\bar{ \theta'_m w_m})] dz   \notag\\
&=  -  \int_0^{2\pi}    (\bar{w_m \partial_z \theta'_m})  \left[ \bar{(\partial_z \theta'_m) w_m}
+ \bar{\theta'_m (\partial_z w_m)} \right]  dz     \notag\\
&= - \int_0^{2\pi}   (\bar{w_m \partial_z \theta'_m})^2 dz
-  \int_0^{2\pi}    (\bar{w_m \partial_z \theta'_m})   (\bar{\theta'_m \partial_z w_m}) dz   \notag\\
&\leq -\frac{1}{2}  \int_0^{2\pi}   (\bar{w_m \partial_z \theta'_m})^2 dz
+  \frac{1}{2}  \int_0^{2\pi}   (\bar{\theta'_m \partial_z w_m})^2 dz,
\end{align}
where the last inequality is due to the Cauchy-Schwarz inequality and the Young's inequality.

By using the Cauchy-Schwarz inequality, and Lemma \ref{lemma2} as well as estimate (\ref{sin-2}), then
 \begin{align}  \label{in-13}
 &\int_0^{2\pi}   (\bar{\theta'_m \partial_z w_m})^2 dz
 \leq  C\int_0^{2\pi} \Big(\int_{[0,2\pi L]^2} |\theta'_m|^2 dx dy \Big)   \Big(\int_{[0,2\pi L]^2} |\partial_z w_m|^2 dx dy\Big) dz \notag\\
 &\leq C \Big[\sup_{z\in [0,2\pi]}   \Big(\int_{[0,2\pi L]^2} |\theta'_m|^2 dx dy \Big) \Big]  \|\partial_z w_m\|_2^2  \leq C   \left(\|\theta'_m\|_2^2 +  \|\partial_z \theta'_m\|_2^2 \right)  \|\nabla_h \theta'_m\|_2^2.
  \end{align}
 Substituting (\ref{in-13}) into (\ref{in-12}) gives
 \begin{align}   \label{in-14}
&- \frac{1}{4 \pi^2 L^2} \int_{\Omega} w_m (\partial_{zz}\bar{\theta_m})  \partial_z \theta'_m dx dy dz   \notag\\
&\leq   -\frac{1}{2}  \int_0^{2\pi}   (\bar{w_m \partial_z\theta'_m})^2 dz
+   C   \left(\|\theta'_m\|_2^2 +  \|\partial_z \theta'_m\|_2^2 \right)  \|\nabla_h \theta'_m\|_2^2.
 \end{align}

Using (\ref{in-9}), (\ref{in-11}) and (\ref{in-14}), we infer from (\ref{in-8}) that
\begin{align}  \label{in-15}
&\frac{d}{dt} \|\partial_z \theta'_m\|_2^2 + \|\nabla_h \partial_z \theta'_m\|_2^2   \notag\\
&\leq    C  \|\partial_z \theta'_m\|_2^2 \left[ \|\Delta_h \theta'_m\|_2^2   + \int_0^{2\pi} |\partial_z \bar{\theta_m}|^2 dz \right]
+C (\|\theta'_m\|_2^2+1)   \|\nabla_h \theta'_m\|_2^2.
 \end{align}
Due to (\ref{strong-0}) and (\ref{in-7})-(\ref{in-77}), we can use Gr\"onwall's inequality on (\ref{in-15}) to obtain
\begin{align}  \label{in-H1z}
\|\partial_z \theta'_m(t)\|_2^2
&\leq e^{\int_0^t   (\|\Delta_h \theta'_m\|_2^2   + \int_0^{2\pi} |\partial_z \bar{\theta_m}|^2 dz) ds  }
\Big(\|\partial_z \theta_0'\|_2^2 +  C (\|\theta'_0\|_2^2+1)      \int_0^t   \|\nabla_h \theta'_m\|_2^2 ds \Big)  \notag\\
&\leq C(\|\theta'_0\|_{H^1(\Omega)}),  \;\;  \text{for all} \; t\geq 0,
\end{align}
where $C(\|\theta'_0\|_{H^1(\Omega)})$ is a constant depending on $\|\theta'_0\|_{H^1(\Omega)}$ but independent of $t$.
In addition, integrating (\ref{in-15}) over $[0,t]$ gives
\begin{align}       \label{in-H1zz}
\int_0^t   \|\nabla_h \partial_z  \theta'_m\|_2^2  ds   \leq    C(\|\theta'_0\|_{H^1(\Omega)}),      \;\;  \text{for all} \; t\geq 0.
\end{align}

On account of (\ref{strong-0}), (\ref{in-7}) and (\ref{in-H1z}), we conclude that
\begin{align}   \label{in-H1ub}
\|\theta'_m(t)\|_{H^1(\Omega)}^2 \leq   C(\|\theta'_0\|_{H^1(\Omega)}), \;\; \text{for all} \;t\geq 0,
\end{align}
where $C(\|\theta'_0\|_{H^1(\Omega)})$ is a constant depending on $\|\theta'_0\|_{H^1(\Omega)}$ but independent of $t$.

\subsubsection{Estimate for $\int_0^{2\pi}\left|\partial_{zz}\bar{\theta_m}\right|^2 dz$}
Since $ \frac{\partial^2 \bar{\theta_m}}{\partial z^2}  =  \frac{\partial(\bar{\theta'_m w_m})}{\partial z} $, we have
\begin{align}    \label{pa-66}
&\int_0^{2\pi}  \Big|\frac{\partial^2 \bar{\theta_m}}{\partial z^2} \Big|^2 dz
 =    \int_0^{2\pi} \Big|\frac{\partial(\bar{\theta'_m  w_m})}{\partial z}\Big|^2 dz
=  \int_0^{2\pi}  \Big|   \bar{\frac{\partial \theta'_m}{\partial z} w_m}   +  \bar{\theta'_m \frac{\partial w_m}{\partial z} }     \Big|^2  dz
\notag\\
&\leq C \int_0^{2\pi} \Big|    \int_{[0,2\pi L]^2}  \frac{\partial \theta'_m}{\partial z} w_m dx dy     \Big|^2  dz
+ C \int_0^{2\pi} \Big|  \int_{[0,2\pi L]^2}  \theta'_m   \frac{\partial w_m}{\partial z} dx dy \Big|^2 dz  \notag\\
&\leq    C \int_0^{2\pi} \Big(\int_{[0,2\pi L]^2}  \Big|\frac{\partial \theta'_m}{\partial z}\Big|^2 dx dy\Big)
\Big(\int_{[0,2\pi L]^2}  | w_m |^2 dx dy\Big)   dz           \notag\\
& \hspace{0.2 in}  +   C \int_0^{2\pi} \Big(\int_{[0,2\pi L]^2}  |\theta'_m|^2 dx dy\Big)
\Big(\int_{[0,2\pi L]^2}  \Big|   \frac{\partial w_m}{\partial z} \Big|^2 dx dy\Big)   dz           \notag\\
&\leq C \Big(\sup_{z\in [0,2\pi]} \int_{[0,2\pi L]^2} |w_m|^2 dx dy\Big) \|\partial_z \theta'_m\|_2^2 +
C \Big(\sup_{z\in [0,2\pi]} \int_{[0,2\pi L]^2} |\theta'_m|^2 dx dy\Big) \|\partial_z w_m\|_2^2  \notag\\
&\leq C  (\|w_m\|_2^2+ \|\partial_z w_m\|_2^2)  \|\partial_z \theta'_m\|_2^2
+  C (\|\theta'_m\|_2^2+ \|\partial_z \theta'_m\|_2^2)  \|\partial_z w_m\|_2^2   \notag\\
&\leq C \|\nabla_h \theta'_m\|_2^2  \|\partial_z \theta'_m\|_2^2 +  C (\|\theta'_m\|_2^2+ \|\partial_z \theta'_m\|_2^2)  \|\nabla_h \theta'_m\|_2^2,
\end{align}
for all $t\geq 0$, where we have used Lemma \ref{lemma2} and estimates (\ref{sin-1}) and (\ref{sin-2}).

Applying uniform bound (\ref{in-H1ub}) to estimate (\ref{pa-66}) yield
\begin{align}    \label{barzz-1}
\int_0^{2\pi}  \Big|\frac{\partial^2 \bar{\theta_m}}{\partial z^2}(z,t) \Big|^2 dz  \leq   C(\|\theta'_0\|_{H^1}),  \;\; \text{for all}   \; t\geq 0,
\end{align}
where $C(\|\theta'_0\|_{H^1(\Omega)})$ is a constant depending on $\|\theta'_0\|_{H^1(\Omega)}$ but independent of $t$.

\vspace{0.1 in}

\subsubsection{Passage to the limit}
Let $T>0$. In order to pass to the limit for nonlinear terms in the Galerkin system as $m\rightarrow \infty$, we shall show that $\partial_t \theta'_m$ is uniformly bounded in $L^2(\Omega \times (0,T))$.

In fact, for any function $\varphi \in L^{4/3}(0,T;L^2(\Omega))$, using Lemma \ref{lemma1}, one has
\begin{align}  \label{non-1}
&\int_0^T \int_{\Omega} [P_m(\mathbf u_m \cdot \nabla_h \theta'_m)] \varphi dx dy dz dt   \notag\\
&\leq C\int_0^T  \|\nabla_h \mathbf u_m\|^{1/2} \left( \|\mathbf u_m\|_2 + \|\partial_z \mathbf u_m\|_2 \right)^{1/2} \|\nabla_h \theta'_m\|_2^{1/2}  \|\Delta_h \theta'_m\|_2^{1/2} \|\varphi\|_2  dt \notag\\
&\leq  C\int_0^T  \|\theta_m'\|^{1/2}  \|\nabla_h \theta'_m\|_2 \|\Delta_h \theta'_m\|_2^{1/2} \|\varphi\|_2  dt   \notag\\
&\leq C(\|\nabla_h \theta'_0\|_2)    \int_0^T         \|\Delta_h \theta'_m\|_2^{1/2} \|\varphi\|_2  dt \notag\\
&\leq C(\|\nabla_h \theta'_0\|_2)   \left(\int_0^T    \|\Delta_h \theta'_m\|_2^2   dt\right)^{1/4}
\left(\int_0^T  \|\varphi\|_2^{4/3}  dt \right)^{3/4}  \notag\\
&\leq C(\|\nabla_h \theta'_0\|_2)   \|\varphi\|_{L^{4/3}(0,T;L^2(\Omega))},
\end{align}
where we have used estimate (\ref{sin-1})-(\ref{sin-2}) and (\ref{in-7})-(\ref{in-77}).
Here, $C(\|\nabla_h \theta'_0\|_2)$ is a constant depending on $\|\nabla_h \theta'_0\|_2$ but independent of $t$.

We infer from (\ref{non-1}) that
\begin{align}  \label{non-2}
\|P_m(\mathbf u_m \cdot \nabla_h \theta'_m)\|_{L^4(0,T;L^2(\Omega))} \leq   C(\|\nabla_h \theta'_0\|_2).
\end{align}

Then, using (\ref{inG-3}), (\ref{wk-15}), (\ref{in-77}) and (\ref{non-2}), we obtain
\begin{align}    \label{non-6}
\|\partial_t \theta'_m\|_{L^2(\Omega \times (0,T))} \leq  C(\|\nabla_h \theta'_0\|_2)   .
\end{align}
Therefore, on a subsequence, we have
\begin{align}  \label{non-7}
\partial_t \theta_m' \rightarrow \partial_t \theta'  \;\; \text{weakly in} \; L^2(\Omega \times (0,T)),   \; \text{as}  \;  m\rightarrow \infty.
\end{align}

In addition, due to uniform bounds (\ref{in-H1ub}) and (\ref{non-6}), and thanks to the Aubin's compactness theorem (see, e.g., \cite{Temam}), the following strong convergence holds for a subsequence of $\{\theta'_m\}$:
\begin{align}   \label{stron-1}
\theta_m' \rightarrow \theta'  \;\; \text{in} \; L^2(\Omega \times (0,T)),   \; \text{as} \;  m\rightarrow \infty.
\end{align}

Also, using (\ref{FourG-3}), we see that
\begin{align}  \label{stron-2}
\|\mathbf u_m -\mathbf u_n\|_2^2 \leq  C\|\theta'_m - \theta'_n\|_2^2,  \;\; \text{for any} \; n,m \in \mathbb N.
\end{align}
We infer from (\ref{stron-1})-(\ref{stron-2}) that $\{\mathbf u_m\}$ is a Cauchy sequence in $L^2(\Omega \times (0,T))$, then
\begin{align}   \label{stron-3}
\mathbf u_m \rightarrow \mathbf u    \;\; \text{in} \;\;L^2(\Omega \times (0,T)),   \; \text{as} \; m\rightarrow \infty.
\end{align}
Analogously, we deduce
\begin{align}   \label{stron-4}
w_m \rightarrow w      \;\; \text{in} \;\;L^2(\Omega \times (0,T)),    \; \text{as}   \; m\rightarrow \infty.
\end{align}

Hence, we can use the same argument as in subsection \ref{sec-pass} to pass to the limit $m\rightarrow \infty$ for the Galerkin system (\ref{inG-1})-(\ref{inG-4}) to derive
\begin{align}   \label{pa-5}
\frac{\partial \theta'}{\partial t}  + \mathbf u \cdot \nabla_h \theta' +  w \frac{\partial \bar{\theta}}{\partial z} -\Delta_h \theta'=0  \;\; \text{in} \;\;  L^2(\Omega \times (0,T)).
\end{align}

Next, we shall pass to the limit for equation (\ref{inG-4}) in the Galerkin system as $m\rightarrow \infty$. By virtue of (\ref{barzz-1}),
$\int_0^{2\pi}  \left|\frac{\partial^2 \bar{\theta_m}  }{\partial z^2}(z,t) \right|^2 dz  \leq   C(\|\theta'_0\|_{H^1})$, for all $t\geq 0$, namely, the sequence $\{\frac{\partial^2 \bar{\theta_m}}{\partial z^2}\}$ is uniformly bounded in $L^{\infty}(0,T;L^2(0,2\pi))$, which implies, on a subsequence,
\begin{align}  \label{pa-7}
\frac{\partial^2 \bar{\theta_m}}{\partial z^2}  \rightarrow   \frac{\partial^2 \bar{\theta}}{\partial z^2}  \;\;  \text{weakly$^*$ in}  \;L^{\infty}(0,T;L^2(0,2\pi)),
\; \text{as} \; m\rightarrow \infty.
\end{align}

In order to pass to the limit for the nonlinear term $\frac{\partial(\bar{\theta_m'  w_m})}{\partial z}$ in equation (\ref{inG-4}), we take a test function $\xi=\sum_{|j|<N}  \hat{\xi}(t,j) e^{ij z}$ where Fourier coefficients $\hat{\xi}(t,j)$, $|j|\leq N$, are continuous in $t$. Consider
\begin{align}   \label{pa-8}
& \left|\int_0^T \int_0^{2\pi} \left(\frac{\partial (\bar{\theta_m'  w_m})}{\partial z} - \frac{\partial (\bar{\theta'  w})}{\partial z}\right) \xi   dz dt   \right|
=   \left|\int_0^T \int_0^{2\pi} (\bar{\theta_m'  w_m}-   \bar{\theta'  w})   \xi_z  dz dt  \right| \notag\\
&\leq \left|\int_0^T \int_0^{2\pi} (\bar{(\theta_m'-\theta')  w_m}) \xi_z  dz dt   \right|+  \left|   \int_0^T \int_0^{2\pi} (\bar{\theta'  (w_m-w)}) \xi_z  dz dt  \right|   \notag\\
&\leq     C\|\xi_z\|_{L^{\infty}((0,2\pi)\times (0,T))}     \int_0^T \int_{\Omega}   \left(\left|(\theta_m'-\theta')  w_m\right|  +    \left|\theta'  (w_m-w)\right| \right) dx dy dz dt    \notag\\
&\leq  C\|\xi_z\|_{L^{\infty}((0,2\pi)\times (0,T))} \big(\|\theta'_m  -  \theta'\|_{L^2(\Omega \times (0,T))}   \|w_m\|_{L^2(\Omega \times (0,T))}     \notag\\
& \hspace{1.5 in}+ \|w_m -  w \|_{L^2(\Omega \times (0,T))}   \|\theta'\|_{L^2(\Omega \times (0,T))}\big),
\end{align}
where we have used the Cauchy-Schwarz inequality.

Then, since $\|w_m\|_2 \leq C\|\theta'_0\|$ for all $m$ on $[0,T]$ and since $\theta'_m \rightarrow \theta'$,  $w_m \rightarrow w$ in $L^2(\Omega \times (0,T))$, we can let $m\rightarrow \infty$ in (\ref{pa-8}) to obtain
\begin{align}  \label{pa-9}
\lim_{m\rightarrow \infty}\int_0^T \int_0^{2\pi} \frac{\partial (\bar{\theta_m'  w_m})}{\partial z}  \xi  dz dt
= \int_0^T \int_0^{2\pi}   \frac{\partial (\bar{\theta'  w})}{\partial z}  \xi   dz dt.
\end{align}

On account of (\ref{pa-7}) and (\ref{pa-9}), we pass to the limit for the equation (\ref{inG-4}) in the Galerkin approximate system, and arrive at
\begin{align}  \label{pa-10}
\int_0^T \int_0^{2\pi} \left(\frac{\partial (\bar{\theta'  w})}{\partial z} -   \frac{\partial^2 \bar{\theta}}{\partial z^2}\right) \xi  dz dt =0,
\end{align}
for any test function $\xi=\sum_{|j|<N}  \hat{\xi}(t,j) e^{ij z}$ with continuous Fourier coefficients.

Similarly to (\ref{pa-66}), we derive that
$\frac{\partial (\bar{\theta'  w})}{\partial z}  \in  L^{\infty}(0,T;L^2(0,2\pi))$. Also
$\frac{\partial^2 \bar{\theta}}{\partial z^2}   \in  L^{\infty}(0,T;L^2(0,2\pi))$ from (\ref{pa-7}). Hence,
$\frac{\partial (\bar{\theta'  w})}{\partial z}  -    \frac{\partial^2 \bar{\theta}}{\partial z^2}  \in    L^{\infty}(0,T;L^2(0,2\pi))$.
Then, we infer from (\ref{pa-10}) that
\begin{align*}
\frac{\partial (\bar{\theta'  w})}{\partial z}  =   \frac{\partial^2 \bar{\theta}}{\partial z^2}  \;\;\text{in} \;\;  L^{\infty}(0,T;L^2(0,2\pi)).
\end{align*}
This completes the proof for the existence of a global strong solution in the sense of Definition \ref{def-solution}.

\vspace{0.2 in}

\noindent {\bf Acknowledgment.} The work of E.S.T. was  supported in part by the Einstein Stiftung/Foundation - Berlin, through the Einstein Visiting Fellow Program.

\vspace{0.1 in}

 %%%%%%%%%%%%%%%%%%%%

\end{document}